\let\Item\item
\newenvironment{romanlist}{%

  \let\item\Item
  \begin{enumerate}
}{%
  \end{enumerate}}
\def\@begintheorem#1#2#3{\trivlist
   \item[\hskip \labelsep{\csname#1headfont\endcsname#1\ #2.} ]~\ignorespaces{\rm #3}\csname#1font\endcsname\ignorespaces}
\def\@opargbegintheorem#1#2#3{\trivlist
      \item[\hskip \labelsep{\csname#1headfont\endcsname#1\ #2\ }][#3].~\csname#1font\endcsname\ignorespaces}
\def\@endtheorem{\endtrivlist}
\newtheorem{theorem}{Theorem}
\newtheorem{lemma}{Lemma}
\newtheorem{proposition}{Proposition}
\newtheorem{definition}{Definition}[section]
\newtheorem{remark}{Remark}[section]
\begin{document}

%\catchline{}{}{}{}{} % Publisher's Area please ignore

\markboth{C. Ri et al.}{Entropy for A-coupled-expanding maps and
chaos}

\title{ENTROPY FOR A-COUPLED-EXPANDING MAPS AND CHAOS\\
 }

\author{CHOL-GYUN RI   and  HYON-HUI JU}
\address{CHOL-GYUN RI   and  HYON-HUI JU \newline 
Department of Mathematics, \\
 Kim Il Sung University, Pyongyang, D. P. R. Korea.}
\email{jhyonhui@yahoo.com}
 %$^{\dagger}$jhyonhui@yahoo.com}
\author{XIAOQUN WU}
\address{XIAOQUN WU \newline
School of Mathematics and Statistics, \\
Wuhan University, Hubei, 43072, China}
\email{xqwu@whu.edu.cn}
 %$^{\dagger}$xqwu@whu.edu.cn}

\maketitle

\begin{abstract}
The  concept of "$A$-coupled-expanding" map for a transition matrix
$A$ has been studied  as one of the most important criteria of chaos
in the past years. In this paper, the lower bound of the
topological entropy for strictly $A$-coupled-expanding maps  is
studied as a criterion for chaos in the sense of Li-Yorke, which is
less conservative and more generalized than the latest result is
 presented.  Furthermore, some conditions for $A$-coupled-expanding
 maps excluding the strictness   to be factors of subshifts of finite
 type  are derived. In addition, the topological entropy of  partition-$A$-coupled-expanding
map, which is put forward in this paper, is further estimated on
compact metric spaces.
 Particularly,  the topological entropy for partition-$A$-coupled-expanding circle maps
is given, with that for the Kasner map being calculated for
illustration and verification.
\end{abstract}

\keywords{Chaos; Topological entropy; Coupled-expanding map;
Transition matrix;  Topological semi-conjugacy}

%\begin{multicols}{2}

\section{Introduction}
\noindent The term "chaos" was first introduced into mathematics by
Li and Yorke \cite{LiYo 1975}. Since then, various definitions of
chaos have been proposed. However, in general, they do not coincide,
and none of them can be considered as a perfect definition of chaos.
Most of the definitions are based on the feature of long-term
unpredictability of chaotic behaviors due to sensitivity to initial
conditions.

The relations between those definitions have been widely studied, as
well as other measures of complexity, such as the topological
entropy,
 conjugacy or semi-conjugacy to symbolic dynamical systems,
  the Lyapunov exponent, the Hausdorff dimension, and so on.

The notion of "coupled-expansion" as a criterion for chaos was
originated from the terminology
 of turbulence \cite{BlCo 1986, BlCo 1992}
 and was considered as an important property of one-dimensional dynamical
 system. Specifically,
  a continuous map $f:I\rightarrow I$, where $I$ is the unit interval,
  is said to be \emph {turbulent} if there exist closed non degenerated subintervals
   $J$, $K$ of $I$  with pairwise disjoint interiors such that
    $J{\cup}K \subset f(J) \cap  f(K)$.

Furthermore, the map $f$ is said to be \emph {strictly turbulent} if
the subintervals $J$ and $K$ can be chosen disjoint. In fact, the
same concept was studied in one-dimensional dynamical system by
Misiurewicz \cite{Mis 1979, Mis 1980}, who called this property as
"horseshoe'', since it is similar to the Smale's horseshoe effect
\cite{Sma 1967}.  Let $f:I\rightarrow I$ be an interval map and
$J_1, {\cdots}, J_n$ be chosen non degenerate subintervals with
pairwise disjoint interiors such that
$J_1{\cup\cdots\cup}J_n{\subset}f(J_i)$ for $i=1, {\cdots}, n$. Then
$(J_1, {\cdots}, J_n)$ is called an \emph {$n$-horseshoe}, or simply
a \emph {horseshoe} if $n\geq2$.

In recently years, the study of chaos in the setting of general
topological dynamics has attracted wide attention.  In 2006, Shi and
Yu captured the essential meanings of the concept of turbulence for
continuous interval maps and extended it to maps in general metric
spaces \cite{ShYu 2006a}, where the maps were still called
turbulent. Since the term turbulence is well-established in fluid
mechanics, they changed the term "turbulence'' to the
"coupled-expansion''  \cite{ShYu 2006b}, which is more intuitive in
reflecting the conditions that the map satisfies. In 2009,  Shi,  Ju
and Chen extended the concept of "coupled-expansion'' to a more
general one -- "$A$-coupled-expansion'' for the transition matrix $A$
\cite{ShJuCh 2009}, which contains  "coupled-expansion'' as a
special case when each entry of the matrix $A$ equals to 1. In these
papers, several criteria of chaos induced by strictly
coupled-expanding maps or $A$-coupled-expanding maps  have been
established in  metric spaces, essentially with the compactness,  by
using the conjugacy to symbolic dynamical systems.
   Some  extended  criteria of chaos based on a result in \cite{ShJuCh 2009}
   in the  context of the complete metric space, can also be found in
\cite{XuYu 2010}.

Actually, the essential ideas of these notions include "horseshoe''
and "turbulence''. Thus, "Coupled-expansion'' and
"$A$-coupled-expansion''  in  metric spaces are based on domain
splitting  and  it is inclined to use conjugacy or semi-conjugacy to
symbolic dynamical systems, that is, shift or subshift of finite
type. This is one of the common and useful ways to study  chaos or
complex behaviors. Therefore, there are various other results, which
are more or less similar.

In 2001, Kennedy and York \cite{KeYo 2001} studied topological
horseshoes and proved that a continuous map in a compact invariant
set of a metric space could be topologically semi-conjugate to a
symbolic dynamical system under some hypotheses, called the
horseshoe hypotheses. Particularly, in 2001, Fu and Lu et al.
\cite{FuLuAsDu 2001} (Definition 7.1, Theorem 7.2) presented the
concept of "distillation'' and conditions for conjugacy to the
subshift of finite type, which are akin to the results in
\cite{ShJuCh 2009} but in the Hausdorff space .

As mentioned above, the $A$-coupled-expanding property has been
regarded as an  important criterion for chaos. In addition, the
positive topological entropy, which measures the complexity of a
system, is also known as one of the main indices to characterize
chaos. Therefore, it is natural to consider the topological entropy
for $A$-coupled-expanding maps and the relationships between various
concepts of chaos.  As to this topic, the topological entropy for a
$p$-horseshoe map (that is, a $p$-coupled expanding map) on the
interval was considered in \cite{Rue 2003}. In \cite{Miy 2002}, the
necessary and sufficient condition for a continuous map from a
circle into itself to have a positive topological entropy was
presented, and it was proved that the circle map is chaotic in the
sense of Devaney if and only if its topological entropy is positive.

In this paper, the topological entropy of  $A$-coupled-expanding
maps on compact metric spaces is studied. The lower bound of the
topological entropy for  strictly  $A$-coupled-expanding maps on
compact metric spaces is obtained and a less conservative and  more
generalized criterion for chaos in the sense of Li-Yorke is further
presented. A new concept, the partition-$A$-coupled-expanding map,
is defined
 excluding the usual strictness condition, and its topological  entropy is
estimated. Particularly, the topological entropy for the
partition-$A$-coupled-expanding circle map is given. As an
illustrative example,   the  topological entropy for the Kasner map
is calculated,  which is one of the most important tools in the
research on Cosmological models of Bianchi type in the big-bang
singular limit.

The paper is organized as follows. In Section \ref{pre},  some basic
concepts and lemmas are briefly introduced. In Section
\ref{entropy},  the topological entropy for the strictly
$A$-coupled-expanding maps on compact metric spaces is discussed and
a criterion for chaos in the sense of Li-Yorke which is less
conservative and more generalized than the result of \cite{ShJuCh
2009} is presented. In Section \ref{topentropy}, some conditions for
$A$-coupled-expanding maps to be a factor of subshifts of finite
type and the topological entropy for $A$-coupled-expanding circle
maps excluding the strictness condition are discussed. The
topological entropy and a chaotic property of the Kasner map are
further presented  as an illustrative example  of our theoretical
results for demonstration. A brief conclusion is drawn in Section
\ref{con}.

%%% 2. Preliminaries
\section{Preliminaries}\label{pre}
%%%% 2.1 Basic Definitions
\subsection{Basic Definitions}

A topological dynamical system is a pair  $(X, T)$, where  $X$ is a
compact metric space with a metric $d$ and $T$ is a surjective
continuous map from $X$ to itself.

Among the various definitions of chaos, the first and well-known one
appeared in mathematics  is chaos in the sense of Li and Yorke. It
is based on the idea of \cite{LiYo 1975} , however, was formalized
afterwards.

\begin{definition} A set $S\subset X$ is called a \emph
{scrambled set} if, for any two distinct points $x, y\subset S$,
\begin{displaymath}
\liminf\limits_{n\to\infty}d(T^n(x), T^n(y))=0
\quad\textrm{and}\quad \limsup\limits_{n\to\infty}d(T^n(x),
T^n(y))>0.
\end{displaymath}
The system $(X, T)$ is said to be \emph {chaotic in the sense of
Li-Yorke} if there exists an uncountable scrambled set $S$.
\end{definition}

\begin{definition}\cite{Rob 1995}
 A $p\times p$ matrix $A=(a_{ij})_{1\leq i, j\leq p}$ is said
to be \emph {transition} if $a_{ij}=0$ or 1 for all $i, j$;
$\sum\limits^p_{j=1}a_{ij}\geq 1$ for all $i$; and
$\sum\limits^p_{i=1}a_{ij}\geq 1$ for all $j$.
\end{definition}

\begin{definition} {\cite{BrSt 2003}} A
$p\times p$ matrix $A=(a_{ij})_{1\leq i, j\leq p}$ is said to be
\emph {nonnegative (positive)} if any entry of $A$ is nonnegative
(positive). The matrix $A$ is said to be \emph {irreducible} if for
any pair $(i, j), 1\leq i, j\leq p$, there exists a positive integer
$k$ such that $a^k_{ij}>0$, where $A^k=(a^k_{ij})_{1\leq i, j\leq
p}$. $A$ is said to be \emph {primitive} (or \emph{eventually
positive} \cite{ShJuCh 2009}) if $A^k$ is positive for some positive
integer $k$.
\end{definition}

\begin{definition} {\cite{ShJuCh 2009}} Let
$(X, d)$ be a metric space, $T:D\subset X\rightarrow X$ a map, and
$A=(a_{ij})_{1\leq i, j\leq p}$ a transition matrix, where $p\geq
2$. If there exist $p$  nonempty subsets $\Lambda_i(1\leq i\leq p)$
of $D$ with pairwise disjoint interiors such that
\begin{displaymath}
T(\Lambda_i)\supset\bigcup\limits_{\substack{j\\a_{ij}=1}}\Lambda_j,
1\leq i\leq p
\end{displaymath}
then $T$ is said to be \emph {$A$-coupled-expanding} in $\Lambda_i,
1\leq i\leq p$. Further, the map $T$ is said to be \emph {strictly
$A$-coupled-expanding in $\Lambda_i, 1\leq i\leq p$,} if $
d(\Lambda_i, \Lambda_j)>0$ for all $1\leq i\neq j\leq p$.
\end{definition}

\begin{remark}
 The dynamical system $(X, T)$ is said to be
$A$-coupled-expanding if $T$ is $A$-coupled-expanding.
\end{remark}

\begin{remark}
 In the special case that all entries of $A$ are
ones, the (strict) $A$-coupled-expansion  is (strict)
coupled-expansion or,  (strict) $p$-coupled-expansion.
\end{remark}

\begin{definition} \cite{PoYu 1998} Let $(X_1,T_1), (X_2, T_2)$ be
topological dynamical systems. If there exists a homeomorphism
$h:X_1\rightarrow X_2$ such that $h\circ T_1=T_2\circ h$, then $T_1$
is said to be \emph {topologically conjugate to $T_2$}. If $h$ is
continuous and surjective, but not necessarily invertible, and
$h\circ T_1=T_2\circ h$,   then $T_1$ is said to be \emph
{topologically semi-conjugate to $T_2$}, and $T_2$ is said to be a
\emph {factor of $T_1$}.
\end{definition}

%%%% 2.2 Topological Entropy
\subsection{Topological Entropy}
\noindent  Adler, Konheim and McAndrew defined  topological entropy
for dynamical systems on compact metric spaces \cite{AdKoMc 1965}, a
key quantifier for complicated dynamical behaviors that plays an
important role in the
classification of dynamical systems. % up to conjugacy.
In the following the definition is briefly introduced. More details
can be found in \cite{PoYu 1998} and \cite{Wal 1982}.

Suppose $(X, T)$ be a topological dynamical system as addressed
above. Let $\alpha=\{A_i\}$ and $\beta=\{B_j\}$ be covers of $X$,
and define the \emph{refinement} as $\alpha\vee\beta=\{A_i\cap
B_j:A_i\cap B_j\neq \phi\}$. Moreover, if $\alpha^r=\{A^r_1,\cdots,
A^r_{N_r}\} (r=1, \cdots, k)$ are covers of $X$, define their
refinement as
\begin{displaymath}
\bigvee\limits_{r=1}^k\alpha^r=\{A^1_{i_1}\cap A ^2_{i_2}\cap
\cdots\cap  A^k_{i_k}\neq\phi: i_j\in \{1, \cdots, N_r\}, j=1,
\cdots, k\}.
\end{displaymath}

Denote $T^{-1}\alpha=\{T^{-1}A_1,\cdots,T^{-1}A_n\}$. More
generally, one has
\begin{eqnarray*}
\bigvee\limits_{i=0}^{k-1}T^{-i}\alpha&=&\alpha\vee\cdots\vee T^{-(k-1)}\alpha\\
&=&\{A_{i_0}\cap T^{-1}A_{i_1}\cap\cdots\cap
T^{-(k-1)}A_{i_{k-1}}\neq\phi:1\leq i_0,\cdots,i_{k-1}\leq n\}.
\end{eqnarray*}
Let $N(\alpha)$ be the smallest number of sets that can be used as a
subcover) of $\alpha$.

\begin{definition} The \emph{topological entropy for
$T$ relative to a cover $\alpha$} is defined by
\begin{displaymath}
h_{top}(\alpha, T)=\limsup\limits_{n\to\infty}\frac{1}{n}\log
N\left(\bigvee\limits_{i=0}^{n-1}T^{-i}\alpha\right).
\end{displaymath}
\end{definition}

\begin{remark} The sequence $\left(\frac{1}{n}\log
N\left(\bigvee\limits_{i=0}^{n-1}T^{-i}\alpha\right)\right)_{n\geq1}$
is sub-additive, therefore
\begin{displaymath}
h_{top}(\alpha, T)=\lim\limits_{n\to\infty}\frac{1}{n}\log
N\left(\bigvee\limits_{i=0}^{n-1}T^{-i}\alpha\right)=\inf\limits_{n\geq1}\frac{1}{n}\log
N\left(\bigvee\limits_{i=0}^{n-1}T^{-i}\alpha\right).
\end{displaymath}
\end{remark}

\begin{definition} The topological entropy for the
topological dynamical system $(X,T)$ is defined by
\begin{displaymath}
h_{top}(X, T)=\sup\{h_{top}(\alpha, T):\alpha \textrm{ is a finite
open cover of }X\}.
\end{displaymath}
\end{definition}

\vskip0.5cm \iffalse {\sc Remark 2.4} The $h_{top}(T)$ that is the
topological entropy for the map $T:X\rightarrow X$ means $h_{top}(X,
T)$. \fi
%%%% 2.3 Transition matrix and graph
\subsection{Transition matrix and graph}
 Define
the norm $\|\cdot\|$ of matrix $A=\left(a_{ij}\right)_{1\leq i,
j\leq p}$ by $\| A\|=\sum\limits_{1\leq i,j\leq p}|a_{ij}|$. It is
known that
$\limsup\limits_{n\to\infty}\|A^n\|^\frac{1}{n}=sup\{|\lambda|:\lambda
\textrm{ is the eigenvalue of }A\}$.

%Theorem 2.1 (Perron-Frobenius)
\begin{theorem}[Perron-Frobenius]
Let $A$ be a non-negative irreducible square matrix. Then there
exists an eigenvalue $\lambda$ of $A$ with the following properties:
(i) $\lambda>0$, (ii) $\lambda$ is a simple root of the
characteristic polynomial, (iii) $\lambda$ has a positive
eigenvector, (iv) if $\mu$ is any other eigenvalue of $A$, then
$|\mu|\leq\lambda$, (v) if $k$ is the number of eigenvalues of
modulus $|\lambda |$, then the spectrum of $A$(with multiplicity) is
invariant under the rotation of the complex plane by angle $2\pi/k$.
\end{theorem}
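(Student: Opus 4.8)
The plan is to follow the classical route, establishing the five properties in the order $(\mathrm{i}),(\mathrm{iii})$ together, then $(\mathrm{iv})$, then $(\mathrm{ii})$, and finally the cyclic structure $(\mathrm{v})$. Write $n$ for the size of $A$ and work with the Collatz--Wielandt functional
\[
r(x)=\min_{\,i:\,x_i>0}\frac{(Ax)_i}{x_i},\qquad x\ge 0,\ x\ne 0,
\]
and set $\lambda=\sup\{r(x):x\ge 0,\ x\ne 0\}$. The defining inequality $Ax\ge r(x)\,x$ together with $A\ge 0$ shows that $r$ does not decrease when $x$ is replaced by $(I+A)^{n-1}x$, which is strictly positive by irreducibility; since $r$ is continuous on the positive cone, the supremum is attained at some $v$, which we may normalise to lie in the simplex $\Delta=\{x\ge 0:\sum_i x_i=1\}$. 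If $Av\ne\lambda v$ then $Av-\lambda v\ge 0$ is nonzero, so $(I+A)^{n-1}(Av-\lambda v)>0$, forcing $r\bigl((I+A)^{n-1}v\bigr)>\lambda$, a contradiction; hence $Av=\lambda v$ with $\lambda=r(v)>0$. This gives $(\mathrm{i})$ and the existence in $(\mathrm{iii})$, and applying $(I+A)^{n-1}$ to the relation $(I+A)^{n-1}v=(1+\lambda)^{n-1}v$ shows $v>0$.

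For $(\mathrm{iv})$, take any eigenvalue $\mu$ with eigenvector $z\in\mathbb{C}^n$, $z\ne 0$, and apply the triangle inequality componentwise to $Az=\mu z$; since $A\ge 0$,
\[
|\mu|\,|z|=|Az|\le A|z|,
\]
so $r(|z|)\ge|\mu|$ and hence $|\mu|\le\lambda$ by the definition of $\lambda$.

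For the simplicity $(\mathrm{ii})$, I would first show the $\lambda$-eigenspace is one dimensional: if $v$ and a linearly independent real eigenvector $w$ existed, a suitable combination $v-tw$ (taking $t$ up to the first value at which a coordinate vanishes) would be nonnegative, nonzero, and have a zero entry, contradicting the strict positivity forced on every nonzero $\lambda$-eigenvector by the argument above. To upgrade geometric to algebraic simplicity, I would use that $A^{\mathsf{T}}$ is also nonnegative and irreducible, hence has a strictly positive eigenvector $u$ for the same $\lambda$; a nontrivial Jordan block would provide a generalised eigenvector $y$ with $(A-\lambda I)y=v$, and pairing with $u$ gives $0=u^{\mathsf{T}}(A-\lambda I)y=u^{\mathsf{T}}v>0$, a contradiction.

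The hard part is $(\mathrm{v})$. The plan is to analyse eigenvalues on the critical circle $|\mu|=\lambda$. For such a $\mu=\lambda e^{i\theta}$ equality must hold throughout $|\mu||z|\le A|z|$, so $A|z|=\lambda|z|$ and hence $|z|=v$ by $(\mathrm{ii})$; writing $z_j=v_je^{i\phi_j}$ and $D=\mathrm{diag}(e^{i\phi_j})$, the equality case of the triangle inequality (using $v>0$ and $A\ge 0$) forces the matrix identity $D^{-1}AD=e^{i\theta}A$. Since $D$ is a similarity, $A$ and $e^{i\theta}A$ share the same spectrum with multiplicity, so the spectrum is invariant under multiplication by $e^{i\theta}$; the set of admissible $e^{i\theta}$ is a finite subgroup of the unit circle, hence the group of $k$-th roots of unity, and taking $\theta=2\pi/k$ yields $(\mathrm{v})$. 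The main obstacle is making the phase-rigidity step precise: one must verify that equality in $|Az|\le A|z|$, combined with irreducibility, propagates a single consistent phase relation along every path of the graph associated with $A$, which is exactly where the graph-theoretic content of irreducibility enters.
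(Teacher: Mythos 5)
The paper never proves this statement: it is quoted in the preliminaries as the classical Perron--Frobenius theorem, a background result (available in the paper's own reference, Horn \& Johnson, \emph{Matrix Analysis}), so there is no in-paper proof to compare yours against. Judged on its own merits, your proposal is the standard Collatz--Wielandt/Wielandt proof and its architecture is sound: extremal characterisation of $\lambda$ giving (i) and (iii), the triangle-inequality bound for (iv), geometric-plus-Jordan-chain argument for (ii), and the phase-rigidity similarity $D^{-1}AD=e^{i\theta}A$ for (v). Two points need tightening. First, $r$ is \emph{not} continuous on the whole nonnegative cone (the index set $\{i:x_i>0\}$ jumps on the boundary); it is continuous only on strictly positive vectors, so attainment of the supremum should be argued on the compact set $(I+A)^{n-1}\Delta$, which consists of positive vectors and over which the supremum of $r$ is unchanged --- you have all the ingredients for this but state the continuity too broadly. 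Second, in part (v) the step ``equality must hold throughout $|\mu|\,|z|\le A|z|$'' is not automatic: from $\lambda|z|\le A|z|$ you only get $r(|z|)\ge\lambda$, and you must reuse your earlier extremal argument (apply $(I+A)^{n-1}$ to $A|z|-\lambda|z|\ge 0$) to conclude $A|z|=\lambda|z|$ before invoking (ii) to get $|z|=v$; similarly, the ``phase-rigidity'' you flag as the main obstacle is handled cleanly by setting $B=e^{-i\theta}D^{-1}AD$, noting $|B_{ij}|=a_{ij}$ and $Bv=\lambda v=Av$ with $v>0$, which forces $B_{ij}=a_{ij}$ entrywise --- no path-by-path propagation along the graph is needed. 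With those repairs the argument is complete and correct.
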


Let $A=(a_{ij})_{1\leq i,j\leq p}$ be a transition matrix and
$\Gamma_A$ be a directed graph associated to $A$. In other words,
$\Gamma_A$ is the directed graph with vertices $\{1,\cdots,p\}$ such
that there is an edge $i\rightarrow j$ if and only if $a_{ij}=1$.

\begin{definition} The eigenvalue $\lambda$
of a matrix is said to be \emph{maximal} if for any other eigenvalue
$\mu$,  $|\mu|\leq\lambda$.  The maximal eigenvalue of matrix $A$ is
denoted by $\lambda_A$. \end{definition}

\begin{definition} A cycle is said to be
\emph{full} in the graph if it has all vertices of the graph.
\end{definition}

\begin{definition} The graph is said to be
\emph{unified} if there is a full cycle in it.
\end{definition}

\begin{lemma}\label{lemma2p1}
A transition matrix $A$ is irreducible if and only if there is a
full cycle in $\Gamma_A$.
\end{lemma}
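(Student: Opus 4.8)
The plan is to prove the biconditional in Lemma 2.1 by unpacking both the graph-theoretic notion of a full cycle and the matrix-theoretic notion of irreducibility, and connecting them through the standard combinatorial interpretation of powers of $A$. First I would recall the key bridge between $\Gamma_A$ and $A$: for any positive integer $k$, the entry $a^k_{ij}$ of $A^k$ counts the number of directed walks of length exactly $k$ from vertex $i$ to vertex $j$ in $\Gamma_A$ (this follows by induction on $k$ from the definition of matrix multiplication, using that $a_{ij}\in\{0,1\}$ records the presence of an edge $i\to j$). Consequently, $a^k_{ij}>0$ if and only if there exists a directed walk from $i$ to $j$ of length $k$, and the irreducibility condition ``for every pair $(i,j)$ there is some $k$ with $a^k_{ij}>0$'' is equivalent to saying that $\Gamma_A$ is \emph{strongly connected}: every vertex is reachable from every other vertex by a directed path.

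For the direction that a full cycle implies irreducibility, I would argue as follows. Suppose $\Gamma_A$ contains a full cycle, i.e.\ a closed directed walk visiting all $p$ vertices, say $v_1\to v_2\to\cdots\to v_p\to v_1$ passing through every vertex. Then given any two vertices $i$ and $j$, I can travel along this cycle from $i$ forward until I reach $j$, which gives a directed walk from $i$ to $j$ of some length $k\ge 1$. Hence $a^k_{ij}>0$, so $A$ is irreducible by definition.

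For the converse, suppose $A$ is irreducible, so $\Gamma_A$ is strongly connected in the sense above. Here I must produce a single closed walk that passes through \emph{all} $p$ vertices. The idea is to chain together paths: strong connectivity gives, for each consecutive pair in the list $1,2,\dots,p,1$, a directed path $P_1$ from $1$ to $2$, a path $P_2$ from $2$ to $3$, and so on up to a path $P_p$ from $p$ back to $1$. Concatenating $P_1 P_2\cdots P_p$ yields a closed directed walk that visits every vertex $1,\dots,p$, hence by the definition of ``full'' this is a full cycle in $\Gamma_A$. (One should note that the term ``cycle'' here is being used in the sense of a closed walk through all vertices rather than a simple cycle with no repeated vertices; the concatenated walk may repeat vertices, but it still contains all of them, which is what Definition~2.6 requires.)

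The main obstacle I anticipate is the subtle issue of what ``full cycle'' is meant to denote. If Definition~2.6 demands a genuine simple cycle (a closed walk with no repeated vertices, i.e.\ a Hamiltonian cycle), then the converse becomes false in general, since a strongly connected graph need not be Hamiltonian. Thus the crux of the proof is recognizing that the intended reading of ``cycle'' is a closed walk covering all vertices (allowing repetitions), in which case strong connectivity yields such a walk by the concatenation argument; this interpretive point, rather than any hard computation, is where care is required. Beyond that, the only technical ingredient is the walk-counting identity $a^k_{ij}=\#\{\text{walks } i\to j \text{ of length }k\}$, which I would either cite as standard or establish by a one-line induction.
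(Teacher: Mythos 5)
Your proof is correct, and it supplies considerably more than the paper does: the paper's entire proof of Lemma~\ref{lemma2p1} is ``It is obvious from the definition.'' Your argument --- the walk-counting identity ($a^k_{ij}$ counts directed walks of length $k$, so irreducibility is exactly strong connectivity of $\Gamma_A$), the traversal of the full cycle for one direction, and the concatenation of paths $1\to 2\to\cdots\to p\to 1$ for the other --- is precisely the standard argument the authors are implicitly invoking, so your route is the same one; you have simply written it out.

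The interpretive point you flag is genuinely important, and you resolve it correctly: ``full cycle'' must mean a closed directed walk visiting every vertex, not a Hamiltonian (simple) cycle. Under the Hamiltonian reading the lemma is false; for instance, the transition matrix on $\{1,2,3\}$ with edges $1\to 2$, $2\to 1$, $2\to 3$, $3\to 2$ is irreducible, but it has no simple cycle through all three vertices, since neither $1\to 3$ nor $3\to 1$ is an edge. Your closed-walk reading is also the one consistent with the paper's later usage: in the proof of Lemma~\ref{lemma3p3}, the quantity $a^{n+n_0}_{ii}$ is described as the number of ``cycles'' of length $n+n_0$ through vertex $i$, which is only accurate if cycles are allowed to repeat vertices, i.e.\ are closed walks. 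So your proposal is not merely correct; it supplies the disambiguation on which the paper's ``obvious'' silently depends.
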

\begin{proof} It is obvious from the definition. \end{proof}

%Lemma 2.2
\begin{lemma}
Let $A=(a_{ij})_{1\le i,j\le p}$ and $B=(b_{ij})_{1\le i,j\le p}$ be
two transition matrices. If $A\le B$ then $\lambda_A\le \lambda_B$,
where $A\le B$ means $a_{ij}\le b_{ij}$ for $ 1\le i,j\le p$.
\end{lemma}
\begin{proof}  From Theorem 8.1.18 of \cite{HoJo 1985}, one gets $\rho (A) \le \rho (B)$,
 where $\rho (A)$ and $\rho (B)$ are respectively the spectrum radius of $A$ and $B$.
  Since $A$ and $B$ are nonnegative matrices, from  Theorem 8.3.1 in \cite{HoJo 1985},
  $\rho (A)$ and $\rho (B)$ are respectively the eigenvalue of $A$ and $B$.
  Therefore, $\lambda_A\le\lambda_B$. \end{proof}

\vskip0.5cm
%Lemma 2.3
\begin{lemma}
If $A$ is a transition matrix, then its maximal eigenvalue
$\lambda_A\ge1$.

\end{lemma}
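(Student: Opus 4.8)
The plan is to combine the spectral-radius formula recorded just above the statement with the defining row-sum condition of a transition matrix. Since $\limsup_{n\to\infty}\|A^n\|^{1/n}$ equals $\sup\{|\lambda| : \lambda \text{ is an eigenvalue of } A\}$, and since $A$ is a nonnegative matrix, Theorem 8.3.1 of \cite{HoJo 1985} (already invoked in the preceding lemma) guarantees that the spectral radius is itself an eigenvalue; hence the maximal eigenvalue satisfies $\lambda_A = \sup\{|\lambda|\} = \limsup_{n\to\infty}\|A^n\|^{1/n}$. Thus it suffices to bound $\|A^n\|$ from below uniformly in $n$.

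First I would test $A$ against the all-ones vector $\mathbf 1 = (1,\dots,1)^{\top}$. By the transition property each row sum of $A$ is at least $1$, i.e.\ $(A\mathbf 1)_i = \sum_{j} a_{ij} \ge 1$ for every $i$, so $A\mathbf 1 \ge \mathbf 1$ componentwise. Because $A$ has nonnegative entries, multiplication by $A$ preserves componentwise inequalities, which lets me iterate: if $A^{n-1}\mathbf 1 \ge \mathbf 1$, then $A^{n}\mathbf 1 = A(A^{n-1}\mathbf 1) \ge A\mathbf 1 \ge \mathbf 1$. By induction $A^n \mathbf 1 \ge \mathbf 1$ for all $n \ge 1$.

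Next I would read off the norm. Since every entry of $A^n$ is nonnegative, the sum of all its entries equals the sum of its row sums, so $\|A^n\| = \sum_{1\le i,j\le p} a^n_{ij} = \sum_{i=1}^{p} (A^n \mathbf 1)_i \ge \sum_{i=1}^p 1 = p \ge 1$. Because $t \mapsto t^{1/n}$ is increasing on $[0,\infty)$, this gives $\|A^n\|^{1/n} \ge 1$ for every $n$, whence $\lambda_A = \limsup_{n\to\infty}\|A^n\|^{1/n} \ge 1$, as claimed.

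The argument is essentially routine; the only point requiring care is the identification $\lambda_A = \limsup_{n}\|A^n\|^{1/n}$, i.e.\ that the largest-modulus eigenvalue of the nonnegative matrix $A$ is real and equal to the spectral radius. This is exactly the content of the nonnegative-matrix theory already cited in the excerpt, so no new work is needed there. I would also note that the bound is sharp: any permutation matrix is a transition matrix with $\lambda_A = 1$.
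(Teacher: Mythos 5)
Your proof is correct, but it takes a genuinely different route from the paper. The paper disposes of this lemma in one line, citing Theorem 8.1.22 of \cite{HoJo 1985}, which bounds the spectral radius of a nonnegative matrix below by its minimum row sum; since every row sum of a transition matrix is at least $1$, the conclusion $\lambda_A\ge 1$ is immediate. You instead prove the needed lower bound from scratch: the row-sum condition gives $A\mathbf{1}\ge\mathbf{1}$ for the all-ones vector $\mathbf{1}$, nonnegativity lets you iterate to $A^n\mathbf{1}\ge\mathbf{1}$, hence $\|A^n\|\ge p$ and $\lambda_A=\limsup_{n\to\infty}\|A^n\|^{1/n}\ge 1$. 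Both arguments ultimately rest on the same background facts already recorded in the paper, namely the formula $\limsup_{n\to\infty}\|A^n\|^{1/n}=\sup\{|\lambda|\}$ stated before the Perron--Frobenius theorem and Theorem 8.3.1 of \cite{HoJo 1985} (invoked in the preceding lemma) identifying that supremum with an actual eigenvalue, so that $\lambda_A$ is well defined; you flag this correctly. What the paper's citation buys is brevity; what your argument buys is self-containedness (you in effect reprove the lower-bound half of Theorem 8.1.22 in the case needed here), together with the slightly stronger quantitative fact $\|A^n\|\ge p$ for all $n$, an iteration in the same spirit as the path-counting argument the paper uses later in Lemma \ref{lemma3p4}. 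Your closing observation that permutation matrices attain $\lambda_A=1$ shows the bound is sharp, a point the paper only makes implicitly in the second half of the proof of Lemma \ref{lemma3p4}.
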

\begin{proof}  This can be concluded directly from  Theorem 8.1.22 of \cite{HoJo 1985}.
 \end{proof}

 %%% 3.  Entropy for strictly A-coupled-expanding maps on the compact metric space and chaos
\section{ Entropy for strictly A-coupled-expanding maps on  compact metric spaces and
chaos}\label{entropy}

%Lemma 3.1
\begin{lemma} \label{lemma3p1}
Let $(X, T)$ be a topological dynamical system and $A$ a transition
matrix. If the map $T:X\rightarrow X$ is strictly
$A$-coupled-expanding in $\Lambda_1,\cdots,\Lambda_p\subset X$, then
$T$ is also strictly $A$-coupled-expanding  in
$\overline{\Lambda_1},\cdots,\overline{\Lambda_p}$, where
$\overline{\cdot}$ is the closure of the set $\cdot$.
\end{lemma}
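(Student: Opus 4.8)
The plan is to check, one at a time, the two defining requirements of strict $A$-coupled-expansion for the closed sets $\overline{\Lambda_1},\dots,\overline{\Lambda_p}$: the covering relation $T(\overline{\Lambda_i})\supset\bigcup_{a_{ij}=1}\overline{\Lambda_j}$ for every $i$, and the positive-distance (strictness) condition $d(\overline{\Lambda_i},\overline{\Lambda_j})>0$ for all $i\ne j$.

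For the covering relation, the decisive point is the compactness of $X$. Since each $\overline{\Lambda_i}$ is a closed subset of the compact space $X$, it is itself compact; as $T$ is continuous, $T(\overline{\Lambda_i})$ is then compact, hence closed in $X$. On the other hand, monotonicity of the image together with the hypothesis gives $T(\overline{\Lambda_i})\supset T(\Lambda_i)\supset\bigcup_{a_{ij}=1}\Lambda_j$. Because $T(\overline{\Lambda_i})$ is closed and the closure of a \emph{finite} union equals the union of the closures, passing to closures yields $T(\overline{\Lambda_i})\supset\overline{\bigcup_{a_{ij}=1}\Lambda_j}=\bigcup_{a_{ij}=1}\overline{\Lambda_j}$, which is exactly the required relation.

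For the strictness condition, I would invoke the standard fact that taking closures leaves the set-distance unchanged, i.e. $d(\overline{\Lambda_i},\overline{\Lambda_j})=d(\Lambda_i,\Lambda_j)$. The inequality $\le$ follows at once from $\Lambda_i\subset\overline{\Lambda_i}$; for $\ge$, take $x\in\overline{\Lambda_i}$ and $y\in\overline{\Lambda_j}$ together with approximating sequences in $\Lambda_i$ and $\Lambda_j$, and use continuity of the metric to pass to the limit in the inequality $d(\cdot,\cdot)\ge d(\Lambda_i,\Lambda_j)$. Hence $d(\overline{\Lambda_i},\overline{\Lambda_j})=d(\Lambda_i,\Lambda_j)>0$ for all $i\ne j$; in particular the sets $\overline{\Lambda_i}$ are nonempty and pairwise disjoint, so their interiors are a fortiori pairwise disjoint, and every hypothesis in the definition is met.

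The step I expect to be the main obstacle is the covering relation, because in general the continuous image of a closed set need not be closed, and without this the inclusion $T(\overline{\Lambda_i})\supset\overline{\bigcup_{a_{ij}=1}\Lambda_j}$ could fail. The compactness of $X$—which is built into the definition of a topological dynamical system adopted here—is precisely what forces $T(\overline{\Lambda_i})$ to be closed and thereby drives the argument; the remaining manipulations are routine point-set topology.
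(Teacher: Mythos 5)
Your proof is correct and follows essentially the same route as the paper's: compactness of $\overline{\Lambda_i}$ (as a closed subset of the compact space $X$) plus continuity of $T$ makes $T(\overline{\Lambda_i})$ closed, so it contains $\overline{\bigcup_{a_{ij}=1}\Lambda_j}=\bigcup_{a_{ij}=1}\overline{\Lambda_j}$. In fact you are somewhat more careful than the paper on the strictness condition, proving $d(\overline{\Lambda_i},\overline{\Lambda_j})=d(\Lambda_i,\Lambda_j)>0$ where the paper only records that the closures are disjoint.
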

\begin{proof}
From the conditions, one obtains  $\overline{\Lambda_i}\cap
\overline{\Lambda_j}=\phi, 1\le i\neq j\le p$, and
$T(\Lambda_i)\supset\bigcup\limits_{\substack{j\\a_{ij}=1}}\Lambda_j,
1\leq i\leq p$. Furthermore, since $\overline{\Lambda_i}$ is compact
and $T$ is continuous, one has
\begin{displaymath}
T(\overline{\Lambda_i})\supset
\overline{\bigcup\limits_{\substack{j\\a_{ij}=1}}\Lambda_j}=
\bigcup\limits_{\substack{j\\a_{ij}=1}}\overline{\Lambda_j}, 1\leq
i\leq p.
\end{displaymath}This completes the proof.
\end{proof}

%Lemma 3.2
\begin{lemma}\label{lemma3p2}
Let $(X, T)$ be a topological dynamical system. If the map
$T:X\rightarrow X$ is strictly $p$-coupled-expanding in
$\Lambda_1,\cdots \Lambda_p\subset X$,
 then $h_{top}(X,T)\ge\log p$, where $p\ge 2$.
\end{lemma}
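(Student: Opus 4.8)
The plan is to bound the entropy from below using a single, carefully chosen finite open cover $\alpha$ of $X$ together with the definition $h_{top}(X,T)\ge h_{top}(\alpha,T)=\lim_{n\to\infty}\frac{1}{n}\log N\!\left(\bigvee_{k=0}^{n-1}T^{-k}\alpha\right)$. Here $p$-coupled-expansion means $A$-coupled-expansion for the all-ones matrix, so $T(\Lambda_i)\supset\bigcup_{j}\Lambda_j$ for every $i$. By strictness $d(\Lambda_i,\Lambda_j)>0$ for $i\ne j$, so by Lemma~\ref{lemma3p1} the closures $\overline{\Lambda_i}$ are pairwise disjoint compact sets and $\delta:=\min_{i\ne j}d(\overline{\Lambda_i},\overline{\Lambda_j})>0$. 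Using compactness of $X$ I would fix a finite open cover $\alpha$ of mesh (maximal diameter) strictly less than $\delta$; then any member of $\alpha$, having diameter $<\delta$, can meet at most one of the sets $\overline{\Lambda_i}$. The goal is to prove $N\!\left(\bigvee_{k=0}^{n-1}T^{-k}\alpha\right)\ge p^n$ for every $n$, which immediately yields $h_{top}(\alpha,T)\ge\log p$.

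First I would show that every length-$n$ itinerary is realized. Fix a word $(s_0,\dots,s_{n-1})\in\{1,\dots,p\}^n$ and construct a point $x$ with $T^k(x)\in\Lambda_{s_k}$ for $0\le k\le n-1$ by backward induction: put $V_{n-1}=\Lambda_{s_{n-1}}$ and $V_k=\Lambda_{s_k}\cap T^{-1}(V_{k+1})$. Since $T$ is $p$-coupled-expanding, $T(\Lambda_{s_k})\supset\bigcup_{j}\Lambda_j\supset\Lambda_{s_{k+1}}\supset V_{k+1}$, so each $V_k$ is nonempty; any $x\in V_0$ works. Distinct words produce distinct points, since two differing words disagree at some coordinate $k$ where the corresponding images lie in the disjoint sets $\Lambda_{s_k}$ and $\Lambda_{s_k'}$.

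Next comes the separation step. Writing $x_w$ for the point associated with the word $w$, I would verify that no single member of the refinement $\bigvee_{k=0}^{n-1}T^{-k}\alpha$ contains two distinct points $x_w,x_{w'}$. Indeed, such a member has the form $U_{t_0}\cap T^{-1}U_{t_1}\cap\cdots\cap T^{-(n-1)}U_{t_{n-1}}$ with each $U_{t_k}\in\alpha$; if it contained both points, then at a coordinate $k$ where $w$ and $w'$ differ the set $U_{t_k}$ would contain $T^k(x_w)\in\overline{\Lambda_{s_k}}$ and $T^k(x_{w'})\in\overline{\Lambda_{s_k'}}$, hence meet two distinct closures, contradicting $\mathrm{mesh}(\alpha)<\delta$. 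Thus each member of the refinement covers at most one of the $p^n$ points $x_w$, so every subcover needs at least $p^n$ members; that is, $N\!\left(\bigvee_{k=0}^{n-1}T^{-k}\alpha\right)\ge p^n$, and therefore $h_{top}(X,T)\ge h_{top}(\alpha,T)\ge\lim_{n\to\infty}\frac{1}{n}\log p^n=\log p$.

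The delicate point, and the part I would write most carefully, is that the partition $\{\Lambda_i\}$ must serve two competing purposes at once: the covering relation $T(\Lambda_i)\supset\bigcup_{j}\Lambda_j$ has to be strong enough to force all $p^n$ itinerary cells to be nonempty, while the strictness (positive separation of the $\Lambda_i$) must guarantee that a single fixed cover of small mesh can never merge two points of different itineraries into one cell. Coordinating these is exactly why the mesh threshold is tied to $\delta$ and why strictness, rather than merely disjoint interiors, is required. As an alternative I could instead build the itinerary map $\pi$ from $\{x:T^k x\in\bigcup_i\overline{\Lambda_i}\ \text{for all}\ k\ge0\}$ onto the full one-sided shift $\Sigma_p$, check $\pi\circ T=\sigma\circ\pi$ and surjectivity by the same nested-nonempty-sets argument, and then invoke monotonicity of topological entropy under factors together with $h_{top}(\Sigma_p,\sigma)=\log p$; but the direct cover-counting route above avoids citing those two external facts and is self-contained.
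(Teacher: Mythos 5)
Your proof is correct and follows essentially the same route as the paper's: both realize every length-$n$ itinerary as a nonempty set via the coupled-expansion covering relation $T(\Lambda_i)\supset\bigcup_j\Lambda_j$, and both use the strict separation of the $\Lambda_i$ to force $N\left(\bigvee_{k=0}^{n-1}T^{-k}\alpha\right)\ge p^n$ for a suitable finite open cover $\alpha$, whence $h_{top}(X,T)\ge\log p$. The only difference is technical rather than conceptual: the paper builds its cover from pairwise disjoint open neighborhoods $U_i\supset\Lambda_i$ together with $X\setminus\bigcup_i\Lambda_i$ and counts distinct refinement elements, whereas you take an arbitrary cover of mesh less than $\delta=\min_{i\ne j}d(\overline{\Lambda_i},\overline{\Lambda_j})$ and count pairwise-separated itinerary points --- and your version of the counting step (no refinement element can contain two points with different itineraries, so every subcover has at least $p^n$ members) is, if anything, spelled out more carefully than the paper's.
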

\begin{proof}
From Lemma \ref{lemma3p1}, one can suppose that $\Lambda_1,\cdots,
\Lambda_p$ are closed without loss of generality. From
$\Lambda_i\cap \Lambda_j\neq \phi,
 1\le i\neq j\le p$, there exist open sets $U_1,\cdots,U_p$ in $X$ such
 that $\Lambda_i\subset U_i, U_i\cap U_j=\phi, 1\le i\neq j\le p$.

Let $U_{p+1}=X\backslash\bigcup\limits_{i=1}^p\Lambda_i$. Then
$U_{p+1}$ is open and $\mathcal{U}=\{ U_1,\cdots,U_p,U_{p+1}\}$ is a
finite open
 cover of $X$,  where $U_{p+1}\cap\Lambda_i=\phi$ for $1\le i\le p$.

For any $k\ge 0$ and any $(i_0\cdots i_k)\in \{1,\cdots,p\}^{k+1}$,
define
\begin{displaymath}
\Lambda_{i_0\cdots i_k}=\Lambda_{i_0}\cap T^{-1}(\Lambda_{i_1})
\cap\cdots\cap T^{-k}(\Lambda_{i_k}).
\end{displaymath}
Therefore, $T^k(\Lambda_{i_0\cdots i_k})=\Lambda_{i_k}(1\le
i_0,\cdots,i_k\le p)$. This is obvious when $k=0$. While if $k>0$,
one has
\begin{eqnarray*}
T^k(\Lambda_{i_0\cdots i_k})&=&T\circ T^{k-1}(\Lambda_{i_0\cdots i_{k-1}}\cap T^{-k}(\Lambda_{i_k})) \\
&=&T\circ T^{k-1}\left(\Lambda_{i_0\cdots i_{k-1}}\cap T^{-(k-1)}\left(T^{-1}(\Lambda_{i_k})\right)\right) \\
&=&T\left(\Lambda_{i_{k-1}}\cap
T^{-1}(\Lambda_{i_k})\right)=T(\Lambda_{i_{k-1}})\cap
\Lambda_{i_k}=\Lambda _{i_k}.
\end{eqnarray*}
Therefore, $\Lambda_{i_0\cdots i_k}$ is non-empty and closed for any
$k (k\ge 0)$ and $(i_0\cdots i_k)\in \{1,\cdots,p\}^{k+1}$.

If $(i_0\cdots i_{n-1})\neq(j_0\cdots j_{n-1})$, there exists some
$k(0\le k\le n-1)$ such that $i_k\neq j_k$. Since
$T^k(\Lambda_{i_0\cdots i_{n-1}})\subset \Lambda_{i_k}\subset
U_{i_k}$, $T^k(\Lambda_{j_0\cdots j_{n-1}})\subset
\Lambda_{j_k}\subset U_{j_k}$, and $ U_{i_k}\cap U_{j_k}=\phi,$ the
set $U_{i_0}\cap T^{-1}(U_{i_1})\cap\cdots\cap
T^{-(n-1)}(U_{i_{n-1}})$ includes $\Lambda_{i_0\cdots i_{k-1}}$ but
$\Lambda_{j_0\cdots j_{k-1}}$. Thus, $U_{i_0}\cap
T^{-1}(U_{i_1})\cap\cdots\cap T^{-(n-1)}(U_{i_{n-1}})$ and
$U_{j_0}\cap T^{-1}(U_{j_1})\cap\cdots\cap T^{-(n-1)}(U_{j_{n-1}})$
are different. Therefore,
$N\left(\bigvee\limits_{i=0}^{n-1}T^{-i}\mathcal{U}\right) \ge p^n$.
Hence, $h_{top}(X, T)\ge h_{top}(\mathcal{U},
T)=\limsup\limits_{n\to\infty}\frac{1}{n}\log
N\left(\bigvee\limits_{i=0}^{n-1}T^{-i}\mathcal{U}\right)\ge \log
p$.
\end{proof}

%Lemma 3.3
\begin{lemma}\label{lemma3p3}
Let $(X, T)$ be a topological dynamical system and $A$ a transition
matrix. If the map $T:X\rightarrow X$ is strictly
$A$-coupled-expanding in $\Lambda_1,\cdots, \Lambda_p\subset X$,
then $h_{top}(X,T)\ge\log \lambda_A$.
\end{lemma}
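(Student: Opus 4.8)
The plan is to generalize the open-cover counting argument of Lemma \ref{lemma3p2} from the full shift on $p$ symbols to the subshift of finite type determined by $A$. By Lemma \ref{lemma3p1} I may assume each $\Lambda_i$ is closed, hence compact. Using the strictness $d(\Lambda_i,\Lambda_j)>0$ for $i\neq j$, I would first separate the pieces by pairwise disjoint open sets $U_1,\dots,U_p$ with $\Lambda_i\subset U_i$, and set $U_{p+1}=X\setminus\bigcup_{i=1}^p\Lambda_i$, so that $\mathcal{U}=\{U_1,\dots,U_{p+1}\}$ is a finite open cover of $X$ with $U_{p+1}\cap\Lambda_i=\phi$ for each $i$.

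The essential new point, compared with Lemma \ref{lemma3p2}, is that not every word $(i_0\cdots i_{n-1})\in\{1,\dots,p\}^n$ is realized; only the \emph{$A$-admissible} ones, i.e.\ those with $a_{i_r i_{r+1}}=1$ for $0\le r\le n-2$. For such an admissible word I would define $\Lambda_{i_0\cdots i_k}=\Lambda_{i_0}\cap T^{-1}(\Lambda_{i_1})\cap\cdots\cap T^{-k}(\Lambda_{i_k})$ and prove, exactly as in Lemma \ref{lemma3p2} but now invoking $T(\Lambda_{i_r})\supset\Lambda_{i_{r+1}}$ (which holds precisely because $a_{i_r i_{r+1}}=1$), that $T^k(\Lambda_{i_0\cdots i_k})=\Lambda_{i_k}$; in particular each such set is non-empty and closed.

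Next I would show that the admissible words inject into the elements of $\bigvee_{i=0}^{n-1}T^{-i}\mathcal{U}$ in a way that forces distinct subcover members. Given $x\in\Lambda_{i_0\cdots i_{n-1}}$, for each $r$ one has $T^r(x)\in\Lambda_{i_r}\subset U_{i_r}$, while $T^r(x)\notin U_{p+1}$ (since $\Lambda_{i_r}\cap U_{p+1}=\phi$) and $T^r(x)\notin U_\ell$ for $\ell\le p,\ \ell\neq i_r$ (since the $U_\ell$ are pairwise disjoint); hence $x$ lies in the \emph{unique} refinement element $U_{i_0}\cap T^{-1}(U_{i_1})\cap\cdots\cap T^{-(n-1)}(U_{i_{n-1}})$. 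As any subcover must retain the unique set covering each such $x$, I obtain $N\!\left(\bigvee_{i=0}^{n-1}T^{-i}\mathcal{U}\right)\ge$ the number of $A$-admissible words of length $n$, and the latter equals $\sum_{i,j}(A^{n-1})_{ij}=\|A^{n-1}\|$ by the graph interpretation of $\Gamma_A$ (paths of length $n-1$ are counted by $A^{n-1}$).

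Finally I would pass to the entropy. Combining the above with $h_{top}(X,T)\ge h_{top}(\mathcal{U},T)$ gives
\[
h_{top}(X,T)\ge\limsup_{n\to\infty}\frac{1}{n}\log\|A^{n-1}\|
=\log\Big(\limsup_{n\to\infty}\|A^{n}\|^{1/n}\Big)=\log\lambda_A,
\]
where the last equality uses the spectral-radius formula $\limsup_n\|A^n\|^{1/n}=\sup\{|\lambda|:\lambda\text{ is an eigenvalue of }A\}=\lambda_A$ recalled in Section \ref{pre}, the supremum being attained as an eigenvalue because $A$ is nonnegative. I expect the main obstacle to be the bookkeeping in the injectivity step — verifying that admissibility is exactly what keeps $\Lambda_{i_0\cdots i_{n-1}}$ non-empty and that the separating cover $\mathcal{U}$ pins each point down to a single refinement element — together with the clean identification of the admissible-word count with $\|A^{n-1}\|$; the limit computation is then routine.
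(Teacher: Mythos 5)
Your proposal is correct, but it follows a genuinely different route from the paper's. The paper does \emph{not} redo the cover-counting for admissible words; instead it reduces Lemma \ref{lemma3p3} to Lemma \ref{lemma3p2}: it selects a unified (irreducible) subgraph $A_k$ of $\Gamma_A$ whose maximal eigenvalue realizes $\lambda_A$, uses the spectral-radius formula to find indices with $\limsup_n (a^n_{ij})^{1/n}=\lambda_A$, appends a fixed return path (length $n_0$) to turn paths from $i$ to $j$ into cycles through $i$, and then observes that the sets $C_l=\Lambda_i\cap T^{-1}(\Lambda_{i^1_l})\cap\cdots\cap T^{-(n+n_0-1)}(\Lambda_{i^{n+n_0-1}_l})$ indexed by these cycles make $T^{n+n_0}$ strictly $a^{n+n_0}_{ii}$-coupled-expanding; Lemma \ref{lemma3p2} applied to $T^{n+n_0}$ together with the power rule $h_{top}(X,T^k)=k\,h_{top}(X,T)$ then yields the bound in the limit. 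Your argument instead generalizes the \emph{proof} of Lemma \ref{lemma3p2} rather than its statement: you count all $A$-admissible words of length $n$, show each nonempty set $\Lambda_{i_0\cdots i_{n-1}}$ pins a point into a unique element of $\bigvee_{i=0}^{n-1}T^{-i}\mathcal{U}$, and conclude $N\left(\bigvee_{i=0}^{n-1}T^{-i}\mathcal{U}\right)\ge\|A^{n-1}\|$, finishing with Gelfand's formula and the fact (Perron--Frobenius for nonnegative matrices) that the spectral radius is attained as an eigenvalue, so it equals $\lambda_A$. Your route is more self-contained: it avoids the entropy power rule and the somewhat delicate step of selecting an irreducible subgraph achieving $\lambda_A$ (which the paper states only loosely), and it directly produces the sharp count $\|A^{n-1}\|$ where the paper settles for the cruder cycle count $a^{n+n_0}_{ii}$. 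The paper's route, in exchange, exposes the dynamical picture that some iterate $T^{n+n_0}$ contains a full horseshoe on $a^{n+n_0}_{ii}$ pieces, and lets Lemma \ref{lemma3p2} be reused as a black box. Both proofs are valid; all the matrix-theoretic facts you invoke are already recalled in Section \ref{pre} of the paper.
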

\begin{proof}  From Lemma \ref{lemma3p1}, one can obtain that $\Lambda_1,\cdots, \Lambda_p$ are closed.
Let $\lambda_A$ be the maximal eigenvalue of some $A_k$, which is a
unified subgraph of $\Gamma_A$. Denote $A_k=(a_{ij})_{1\le i, j\le
m}$ and $A^n_k=(a^n_{ij})_{1\le i, j \le m}$. Then
\begin{displaymath}
\limsup\limits_{n\to\infty}\|A^n_k\|^{\frac{1}{n}}=\limsup\limits_{n\to\infty}\left(
\sum\limits_{1\le i, j \le
m}a^n_{ij}\right)^{\frac{1}{n}}=\lambda_A.
\end{displaymath}
Since
\begin{displaymath}
\limsup\limits_{n\to\infty}\left( \sum\limits_{1\le i, j \le
m}a^n_{ij}\right)^{\frac{1}{n}}= \max\limits_{1\le i, j \le
m}\limsup\limits_{n\to\infty}(a^n_{ij})^{\frac{1}{n}},
\end{displaymath}
there exist $i$ and $j$ $(1\le i, j \le m)$ such that
\begin{displaymath}
\limsup\limits_{n\to\infty}(a^n_{ij})^{\frac{1}{n}}=\lambda_A,
\end{displaymath}
where $a^n_{ij}$ means the number of paths with length $n$ from
vertex $i$ to vertex $j$ in the graph $\Gamma_{A_k}$. Since
$\Gamma_{A_k}$ is unified, there is an $n_0\ge 0$ such that
$a^{n+n_0}_{ii}\ge a^n_{ij}$, for all $n\in \mathbf{N}$. Thus,
$\limsup\limits_{n\to\infty}(a^{n+n_0}_{ii})^{\frac{1}{n}}\ge\lambda_A$,
where $a^{n+n_0}_{ii}$ means the number of cycles of length $n+n_0$
which includes vertex $i$. Denote these cycles by
\begin{displaymath}
\xymatrix{
             i \ar[r]             &            i^1_l \ar[r]          &    i^2_l \ar[d]  &  \\
i^{n+n_0-1}_l \ar[u]   &   i^{n+n_0-2}_l \ar[l]    &  \cdots ,\ar[l]
&  1\le l \le a^{n+n_0}_{ii}.     }
\end{displaymath}
Since $\Gamma_A$ is a graph on the set of vertices $\{1, \cdots , p
\}$ such that $i\rightarrow j$ if and only if
$T(\Lambda_i)\supset\Lambda_j$, define
\begin{displaymath}
C_l=\Lambda_i \cap T^{-1}\left(\Lambda_{i^{1}_l}\right)\cap
T^{-2}\left(\Lambda_{i^{2}_l}\right) \cap \cdots \cap
T^{-(n+n_0-1)}\left(\Lambda_{i^{n+n_0-1}_l}\right), 1\le l \le
a^{n+n_0}_{ii},
\end{displaymath}
then $C_l\neq\phi$, and if $l_1\neq l_2$ then $C_{l_1}\cap
C_{l_2}=\phi$. Furthermore,
$T^{n+n_0}\left(C_l\right)=\Lambda_i\supset
\bigcup\limits^{a^{n+n_0}_{ii}}_{l=1}C_l$ . Thus $T^{n+n_0}$ is
strictly $a^{n+n_0}_{ii}$-coupled-expanding. From Lemma
\ref{lemma3p2} $h_{top}(X,T^n)\ge\log a^{n+n_0}_{ii}$. Therefore,
$h_{top}(X,T)=\frac{1}{n}h_{top}(X,T^n)\ge\frac{1}{n}\log
a^{n+n_0}_{ii}$ for all $n\in\mathbf{N}$. Since
$\limsup\limits_{n\to\infty}(a^{n+n_0}_{ii})^{\frac{1}{n}}\ge\lambda_A$,
one has  $h_{top}(X,T)\ge\log\lambda_A$. \end{proof}

\begin{remark} It was recently proved that a continuous strictly
$A$-coupled-expanding map on compact subsets of a metric space is
topologically semi-conjugate to $\sigma _A$ (Theorem 3.1 of
\cite{XuYuGu 2011}). Lemma \ref{lemma3p3} can be  resulted
immediately from it using Proposition 4.1 of this paper.
\end{remark}

% Lemma 3.4
\begin{lemma}\label{lemma3p4}
Let $A=(a_{ij})_{1\le i, j \le p}$ be an irreducible transition
matrix. Then $\lambda_A>1$ if and only if there exists some
$i_0(1\le i_0 \le p)$ such
 that $\sum\limits^p_{j=1}a_{i_0 j} \ge 2$. In particular, $\lambda_A\ge 2^{\frac{1}{p}}$.
\end{lemma}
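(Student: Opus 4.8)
The plan is to reduce everything to the classical two-sided estimate of the spectral radius of a nonnegative matrix by its row sums, $\min_i\sum_j b_{ij}\le\rho(B)\le\max_i\sum_j b_{ij}$ (Theorem 8.1.22 of \cite{HoJo 1985}, the very tool already used to prove $\lambda_A\ge1$), applied not only to $A$ but crucially to the power $A^p$. The bridge is the identity $\lambda_{A^p}=\lambda_A^p$, which holds because the Perron root $\lambda_A$ of the nonnegative matrix $A$ raises to an eigenvalue of $A^p$ that is again of maximal modulus. Thus both the equivalence and the quantitative bound $\lambda_A\ge2^{1/p}$ will drop out once the row sums of $A$ and of $A^p$ are understood.

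For the direction ``$\lambda_A>1\Rightarrow$ some row sum is $\ge2$'' I would argue by contraposition. If no row sum is $\ge2$, then, since $A$ is a transition matrix (each row sum $\ge1$, entries in $\{0,1\}$), every row sum equals exactly $1$; equivalently $A\mathbf 1=\mathbf 1$. The upper row-sum bound then gives $\lambda_A=\rho(A)\le1$, while the lemma that a transition matrix satisfies $\lambda_A\ge1$ gives the reverse inequality, so $\lambda_A=1$ and in particular $\lambda_A\not>1$. This half is routine.

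The substance is the converse together with the sharp constant. Assume some $i_0$ has $\sum_j a_{i_0 j}\ge2$. By Lemma \ref{lemma2p1}, irreducibility furnishes a full cycle in $\Gamma_A$, and after relabelling the vertices (a simultaneous permutation of rows and columns, which preserves $\lambda_A$ and the row-sum hypotheses) I may assume it is $1\to2\to\cdots\to p\to1$. The plan is to show that \emph{every} row sum of $A^p$ is at least $2$, i.e. from each vertex $v$ there are at least two distinct walks of length exactly $p$. One such walk $w_1$ simply traverses the full cycle once (length $p$, returning to $v$). For a second walk $w_2$, I follow the cycle from $v$ until first reaching $i_0$, there take the \emph{extra} out-edge $i_0\to k$ (which exists as $\sum_j a_{i_0 j}\ge2$ and is distinct from the cycle edge), and then pad back up to total length $p$ by continuing along the cycle, using that every vertex has an out-edge. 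Since $w_1$ and $w_2$ agree up to the visit to $i_0$ and differ at the next step, they are distinct. Hence $\min_v\sum_j a^{(p)}_{vj}\ge2$, so $\lambda_A^p=\rho(A^p)\ge2$ and $\lambda_A\ge2^{1/p}>1$, which simultaneously yields the converse and the ``in particular'' claim.

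The main obstacle is the length bookkeeping in constructing $w_2$: it must have length \emph{exactly} $p$, uniformly over all starting vertices $v$, and this is what pins the exponent at $1/p$. The point to verify is that the detour through $i_0$ can always be completed to length exactly $p$; this works because the first passage from $v$ to $i_0$ along the Hamiltonian cycle costs at most $p-1$ steps, leaving room for the extra edge and the padding. A cruder construction — two distinct \emph{closed} walks at a common vertex of possibly different lengths $\ell_A,\ell_B$, concatenated into walks of length $\ell_A\ell_B$ — still proves $\lambda_A>1$ but only yields the weaker $\lambda_A\ge2^{1/(\ell_A\ell_B)}$; it is the uniform exact-length argument that recovers the stated $2^{1/p}$.
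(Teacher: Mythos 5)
Your overall skeleton (two distinct walks of length exactly $p$ from every vertex, hence every row sum of $A^p$ is at least $2$, hence $\lambda_A^p=\rho(A^p)\ge 2$ by the row-sum bound) is sound, and your contrapositive direction is fine. The genuine gap is the relabelling step: you assume that irreducibility furnishes a \emph{Hamiltonian} cycle, i.e.\ a simple closed path $1\to2\to\cdots\to p\to1$ of length exactly $p$ through all vertices. Irreducibility gives no such thing. The ``full cycle'' of Lemma \ref{lemma2p1} can only mean a closed \emph{walk} visiting all vertices, possibly with repeated vertices and of length greater than $p$ (with the simple-cycle reading, Lemma \ref{lemma2p1} itself would be false), and such a walk cannot be relabelled into $1\to2\to\cdots\to p\to1$. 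Concretely, take
\begin{displaymath}
A=\begin{pmatrix}0&1&0\\1&0&1\\0&1&0\end{pmatrix},
\end{displaymath}
an irreducible transition matrix with a row of sum $2$, whose digraph (edges $1\to2$, $2\to1$, $2\to3$, $3\to2$) contains no cycle through all three vertices without repetition, since $1$ and $3$ are not adjacent. For such a matrix your walk $w_1$ (``traverse the full cycle once, length $p$, returning to $v$'') does not exist, and your bound on the first-passage time to $i_0$ ``along the Hamiltonian cycle'' is likewise meaningless; so as written the argument fails on matrices squarely covered by the lemma.

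The repair is cheap and is exactly what the paper does: in any strongly connected digraph on $p$ vertices, the \emph{shortest} path from $v$ to $i_0$ has some length $k\le p-1$ (a shortest path repeats no vertex), with no Hamiltonicity needed. Then build \emph{both} walks by branching at $i_0$: follow this shortest path, take one of the two distinct out-edges of $i_0$, and pad to length exactly $p$ using out-degree $\ge1$ at every vertex; do the same with the other out-edge. The two walks agree for the first $k$ steps and differ at step $k+1$, so they are distinct, giving $\sum_j (A^p)_{vj}\ge 2$ for every $v$. From there your spectral endgame ($\rho(A^p)\ge2$ by the row-sum lower bound together with $\rho(A^p)=\rho(A)^p$) goes through verbatim, and is in fact a slight streamlining of the paper's finish, which instead iterates the path count to $\|A^{np}\|\ge 2^n p$ and invokes $\lambda_A=\limsup_n\|A^n\|^{1/n}$ along the subsequence $np$.
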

\begin{proof}
From Lemma \ref{lemma2p1}, it is known $\Gamma_A$ has a full cycle.
Firstly, suppose that there exists an $i_0(1\le i_0 \le p)$ such
that $\sum\limits^p_{j=1}a_{i_0 j} \ge 2$. Then vertex $i_0$ is
bifurcating, which means that it has 2 or more outgoing edges.
$\sum\limits^p_{j=1}a_{ij}^p$ means the number of $p$-paths from
vertex $i$, where $p$-paths are paths of length $p$. Since
$\Gamma_A$ has $p$ vertices and matrix $A$ is transition , for any
$i$,  at least one of the paths from vertex $i$ arrives at the
bifurcation vertex $i_0$ for at least $p-1$ times. Therefore, the
number of $p$-paths is no less than 2. Thus,
$\sum\limits^p_{j=1}a^p_{i j} \ge 2$.

A $2p$-path from any vertex $i$ can be considered as a $p$-path
which is from the end of a $p$-path passing the same vertex $i$.
Therefore, the number of $2p$-paths is no less than $2^2$. That is,
$\sum\limits^p_{j=1}a^{2p}_{ij} \ge 2^2$.

Similarly, $\sum\limits^p_{j=1}a^{np}_{ij} \ge 2^n$. Hence,
$\|A^{np}\| \ge 2^np$. Thus,
\begin{displaymath}
\lambda_A=\limsup\limits_{n\to\infty}\|A^{np}\|^{\frac{1}{np}} \ge
\limsup\limits_{n\to\infty}(2^np)^{\frac{1}{np}}=2^{\frac{1}{p}}.
\end{displaymath}

Finally,  suppose that $\sum\limits^p_{j=1}a_{i j}=1$ for any $ i$.
Then $\Gamma_A$ is a simple cycle passing all vertices, with each
vertex having only one outgoing edge and one incoming edge. So,
$\|A^n\|=\sum\limits^p_{j=1}a^n_{i j} = p$. Hence,
\begin{displaymath}
\lambda_A=\limsup\limits_{n\to\infty}\|A^{n}\|^{\frac{1}{n}}=1.
\end{displaymath}
This completes the proof.
\end{proof}

In \cite{BlGlKoMa 2002}, it was proved that if the topological
entropy for $(X,T)$ is positive, then there exists a scrambled
Cantor set. Particularly, $T$ is chaotic in the sense of Li-Yorke.
Here, by using Lemma \ref{lemma3p3} and Lemma \ref{lemma3p4}, one
obtains the following criterion for chaos.
%%theorem3.1
\begin{theorem}\label{theorem3p1}
Let $(X, T)$ be a topological dynamical system and $A$ an
irreducible  matrix,
 which has a row with the row sum being no less than 2. If $(X,T)$ is strictly
 $A$-coupled-expanding, then $h_{top}(X,T)\ge \frac{\log 2}{p}$ and  $(X,T)$ is
 chaotic in the sense of Li-Yorke.
\end{theorem}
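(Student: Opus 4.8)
The plan is to chain the two lemmas just established with the cited entropy/chaos result, since nearly all of the analytic work has already been done in the preceding lemmas. First I would extract the spectral consequence of the combinatorial hypotheses on $A$: by assumption $A$ is an irreducible transition matrix possessing a row whose row sum is at least $2$, so the quantitative "in particular" clause of Lemma \ref{lemma3p4} applies directly to give $\lambda_A \ge 2^{1/p}$ (and in particular $\lambda_A > 1$).

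Next I would convert this eigenvalue bound into an entropy bound. Since $(X,T)$ is assumed to be strictly $A$-coupled-expanding in some subsets $\Lambda_1,\dots,\Lambda_p$, Lemma \ref{lemma3p3} applies verbatim and yields $h_{top}(X,T) \ge \log \lambda_A$. Combining with the previous step gives
\[
h_{top}(X,T) \ge \log \lambda_A \ge \log 2^{1/p} = \frac{\log 2}{p},
\]
which is the first assertion. This quantity is strictly positive because $p$ is finite and $\log 2 > 0$.

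Finally, the positivity of the topological entropy is exactly the hypothesis needed for the Li-Yorke conclusion: by the result of \cite{BlGlKoMa 2002} recalled immediately before the statement, a topological dynamical system with positive topological entropy contains a scrambled Cantor set, and in particular is chaotic in the sense of Li-Yorke. Since $h_{top}(X,T) \ge \frac{\log 2}{p} > 0$, this completes the second assertion.

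As for the main obstacle, there is essentially none left at this level, precisely because the genuine difficulties are isolated in the two preceding lemmas: Lemma \ref{lemma3p4}, where one must count $np$-paths through a bifurcation vertex of the graph $\Gamma_A$ to force the $2^n$ growth, and Lemma \ref{lemma3p3}, where the strict separation of the $\Lambda_i$ is used to manufacture a genuinely $a^{n+n_0}_{ii}$-coupled-expanding iterate of $T$. The only point requiring a moment's care here is to confirm that the hypothesis "a row with the row sum being no less than $2$" matches precisely the trigger condition $\sum_{j} a_{i_0 j} \ge 2$ of Lemma \ref{lemma3p4}, so that the lemma may be invoked without any additional case analysis.
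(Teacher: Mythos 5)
Your proposal is correct and follows exactly the paper's own route: the paper states this theorem as an immediate consequence of Lemma \ref{lemma3p4} (giving $\lambda_A\ge 2^{1/p}$ from the irreducibility and the row-sum condition), Lemma \ref{lemma3p3} (giving $h_{top}(X,T)\ge\log\lambda_A$ from strict $A$-coupled-expansion), and the result of \cite{BlGlKoMa 2002} that positive topological entropy implies Li-Yorke chaos. Nothing is missing; your chaining of these three ingredients is precisely the intended proof.
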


\begin{remark} As a criterion for chaos in the sense of Li-Yorke, the
conditions in this theorem are less conservative than that in
Theorem 5.1 in \cite{ShJuCh 2009}.
\end{remark}

\begin{remark}
In Theorem 3.1 of \cite{XuYuGu 2012}, it was proved that if a dynamical system on metric space is strictly A-coupled-expanding, then there is a subsystem which is topologically semi-conjugate to the finite symbolic dynamical system, furthermore, if the entropy of the finite symbolic dynamical system is positive then the topological dynamical system is chaotic in the sense of Li-Yorke.
They put emphasis on semi-conjugacy of the dynamical system to the finite symbolic dynamical system, while Theorem \ref{theorem3p1} of this paper puts emphasis on proof of the positivity of entropy for the topological dynamical system. The positivity of entropy cannot derived from Theorem 3.1 of \cite{XuYuGu 2012}.
\end{remark}

%%% 4. Topological semi-conjugacy of subshifts of finite type for matrix A to A-coupled expanding maps and topological entropy for A coupled-expanding circle maps and chaos.

\section{Topological entropy for $A$-coupled-expanding circle maps and
chaos}\label{topentropy}

In this section we consider some conditions for
$A$-coupled-expanding maps to be factors of subshifts of finite
type, and the topological entropy for $A$-coupled-expanding circle
maps.
  To compute the upper bound of topological entropy for a map,
   the map must be considered as a full-system, not only as a sub-system. Unfortunately,
   the $A$-coupled-expanding map as a full-system generally does not satisfy the condition of strictness. For example,
    the Kasner map,  which is one of the important tools in the research on
    Cosmological models of Bianchi type in the big-bang singular limit, is a "partition''-$A$-coupled-expanding map excluding the strictness.
  In \cite{Rue 2003}, Ruette considered the lower bound of topological entropy
  for the p-coupled-expanding map from the interval to itself.
  In this section, we obtain  some conditions for $A$-coupled-expanding maps
   excluding the strictness to be factors of subshifts of finite type,
   as well as the upper and lower bounds of topological entropy for "partition''-$A$-coupled-expanding circle maps excluding the strictness.
\subsection{Some conditions for $A$-coupled-expanding maps to be factors of subshifts of finite type}

% Lemma 4.1
\begin{lemma}\label{lemma4p1}
Let $(X, T)$ be a topological dynamical system and $A$ a transition
matrix. If  $T$ is $A$-coupled-expanding in compact sets
 $\Lambda_1,\cdots, \Lambda_p\subset X$ such that
  $\bigcap\limits ^ \infty_{n=o} T^{-n}\left(\Lambda_{a_n}\right)$ is
   singleton for any $\alpha =(a_0 a_1 \cdots)\in \Sigma ^+_A$ , then
   there exists a nonempty compact invariant subset $\Lambda $ such
    that $\left(\Sigma^+_A, \sigma _A \right )$ is topologically semi-conjugate
     to $\left(\Lambda, T|_\Lambda \right)$, that is, the sub-system
       $\left(\Lambda, T|_\Lambda \right)$ is a factor of
         $\left(\Sigma^+_A, \sigma _A \right )$.
\end{lemma}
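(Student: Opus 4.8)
The goal is to construct a continuous surjection $\pi\colon \Sigma_A^+ \to \Lambda$ onto a compact invariant set $\Lambda$ satisfying $\pi \circ \sigma_A = T|_\Lambda \circ \pi$. The hypothesis hands us exactly the candidate for $\pi$: for each one-sided admissible sequence $\alpha = (a_0 a_1 \cdots) \in \Sigma_A^+$, the intersection $\bigcap_{n=0}^\infty T^{-n}(\Lambda_{a_n})$ is a singleton, so I can define $\pi(\alpha)$ to be its unique point. My plan is to set $\Lambda = \pi(\Sigma_A^+)$ and verify the three properties a semi-conjugacy requires: $\pi$ is well-defined into $X$, $\pi$ is continuous and surjective onto $\Lambda$, and the intertwining relation $\pi \circ \sigma_A = T \circ \pi$ holds; compactness and invariance of $\Lambda$ then follow.

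**The map and the intertwining relation.** That $\pi$ is well-defined follows directly from the singleton hypothesis. The intertwining relation is the cleanest step. Writing $\sigma_A(\alpha) = (a_1 a_2 \cdots)$, I would compute
\begin{equation*}
\{\pi(\sigma_A \alpha)\} = \bigcap_{n=0}^\infty T^{-n}(\Lambda_{a_{n+1}}),
\qquad
\{T(\pi(\alpha))\} = T\!\left(\bigcap_{n=0}^\infty T^{-n}(\Lambda_{a_n})\right).
\end{equation*}
The point is that if $x = \pi(\alpha)$, then $x \in \Lambda_{a_0}$ and $T^{n}(x) \in \Lambda_{a_n}$ for all $n$, so $T(x)$ satisfies $T^{n}(T(x)) \in \Lambda_{a_{n+1}}$ for all $n$, placing $T(x)$ in the intersection defining $\pi(\sigma_A \alpha)$; since the latter is a singleton, $T(\pi(\alpha)) = \pi(\sigma_A \alpha)$. (The $A$-coupled-expanding inclusions guarantee each such intersection is nonempty, so no degeneracy occurs.) This also shows $T(\Lambda) \subseteq \Lambda$, giving invariance, and surjectivity of $T|_\Lambda$ onto $\Lambda$ follows because every point of $\Lambda$ is $\pi$ of some sequence whose shift-preimage supplies a $T$-preimage.

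**Continuity — the main obstacle.** The crux is continuity of $\pi$, and this is where I expect to spend the real effort, because $\Sigma_A^+$ carries the product (cylinder) topology while $\Lambda$ carries the metric $d$. I would argue as follows: fix $\alpha$ and $\varepsilon > 0$. For sequences $\beta$ agreeing with $\alpha$ on the first $N+1$ coordinates, both $\pi(\alpha)$ and $\pi(\beta)$ lie in the common set $K_N(\alpha) := \bigcap_{n=0}^{N} T^{-n}(\Lambda_{a_n})$, which is compact (a finite intersection of preimages of compact sets under the continuous $T$, hence closed, inside the compact $X$). The singleton hypothesis should force $\mathrm{diam}\,K_N(\alpha) \to 0$ as $N \to \infty$: the nested compact sets $K_N(\alpha)$ decrease to the singleton $\{\pi(\alpha)\}$, so by a standard nested-compact-set argument their diameters shrink to zero. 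Choosing $N$ with $\mathrm{diam}\,K_N(\alpha) < \varepsilon$ then yields $d(\pi(\alpha),\pi(\beta)) < \varepsilon$ for all $\beta$ in the cylinder agreeing with $\alpha$ through index $N$. The delicate point to check carefully is that $\bigcap_N K_N(\alpha)$ really equals the full infinite intersection $\{\pi(\alpha)\}$ and that the diameter-to-zero conclusion is uniform enough on each fixed $\alpha$; once continuity on the compact domain $\Sigma_A^+$ is established, $\Lambda = \pi(\Sigma_A^+)$ is automatically compact as the continuous image of a compact set, completing the proof.
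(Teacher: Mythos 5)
Your proof is correct and takes essentially the same route as the paper's: both define $\pi(\alpha)$ as the unique point of $\bigcap_{n=0}^{\infty} T^{-n}\left(\Lambda_{a_n}\right)$, prove continuity by showing the finite truncations $\bigcap_{n=0}^{N} T^{-n}\left(\Lambda_{a_n}\right)$ have diameter tending to zero (so sequences agreeing on a long initial block map to $\varepsilon$-close points), obtain compactness of $\Lambda=\pi\left(\Sigma^+_A\right)$ from continuity of $\pi$ on the compact space $\Sigma^+_A$, and verify $\pi\circ\sigma_A=T\circ\pi$. Your write-up is in fact slightly more explicit on the two points the paper leaves implicit, namely the nested-compact-set argument behind the shrinking diameters and the pointwise verification of the intertwining relation.
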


 \begin{proof} Let
\begin{displaymath}
\Lambda :=\bigcup\limits_{\alpha =(a_0 a_1 \cdots) \in \Sigma^+_A}
\left(\bigcap^\infty_{n=0} T^{-n} \left(\Lambda_{a_n}\right)\right).
\end{displaymath}
Obviously  $\Lambda \neq \emptyset$ . Define $\pi:\Sigma ^+_A
\rightarrow \Lambda$ as follows:
\begin{displaymath}
\pi(\alpha):=\bigcap^\infty_{n=0}T^{-n}\left(\Lambda_{a_n}\right),
\alpha=(a_0 a_1 \cdots)\in\Sigma^+_A.
\end{displaymath}
Then $\pi$ is a surjection  and $T(\Lambda)\subset \Lambda$. To show
that $\pi$ is continuous, choose any $\alpha=(a_0 a_1 \cdots)\in
\Sigma^+_A$. Since $\bigcap^\infty_{n=0} T^{-n}
\left(\Lambda_{a_n}\right)$ is singleton, one has
\begin{displaymath}
\forall \varepsilon\ge 0, \exists N_\varepsilon \in N:
d\left(\bigcap
^{N_\varepsilon}_{n=0}T^{-n}\left(\Lambda_{a_n}\right)\right)\le\varepsilon,
\end{displaymath}
where $d(\bullet)$ denotes the diameter of $\bullet$. Let
$\delta=\frac{1}{2^{N_\varepsilon}+1}$, then
\begin{displaymath}
 \beta \in \Sigma ^+_A,d_{\Sigma^+_A}\left(\alpha, \beta\right)\le \delta\Rightarrow a_n =b_n, 0\leq n\leq N_\varepsilon.
\end{displaymath}
Therefore $d(\pi(\alpha),\pi( \beta))\le\varepsilon$, that is, $\pi$
is continuous. Thus, from the compactness of $\Sigma^+_A$ and
continuity of $\pi$, one obtains that $\Lambda
=\pi\left(\Sigma^+_A\right)$ is compact.

Next, for any $\alpha=(a_0 a_1 \cdots )\in \Sigma^+_A$,
\begin{displaymath}
\pi\circ\sigma_A(\alpha)=\pi(\sigma_A(a_0 a_1\cdots))=\pi(a_1
a_2\cdots)=\bigcap^\infty_{n=0}T^{-n}
\left(\Lambda_{a_n}\right)=T\circ\pi(\alpha),
\end{displaymath}
 that is, $\pi\circ \sigma_A=T \circ \pi$ holds.
The proof is thus completed.
\end{proof}

\begin{remark} Lemma \ref{lemma4p1} presents a condition for a subsystem of a given
system to be a factor of a subshift of finite type. However,  the
condition for full systems themselves to be factors of subshifts of
finite type requires estimating the upper bound of the topological
entropy for the systems, which is discussed in the following
theorems.
\end{remark}
% Theoerm 4.1
\begin{theorem} \label{thm4p1}
Let $(X,T)$ be a topological dynamical system and $A$ a transition
matrix. Let $T$ be $A$-coupled expanding in sets $\Lambda_1, \cdots,
\Lambda_p \subset X$,
 where $\bigcup^p_{j=1}\Lambda_j=X$ (it is called partition-$A$-coupled-expanding). If $T$
 satisfies the following conditions:
 \begin{romanlist}[(iii)]
\item  For any $\alpha =(a_0 a_1\cdots)\in \Sigma^+_A$,
$\bigcap\limits ^ \infty_{n=o}
 T^{-n}\left(\Lambda_{a_n}\right)$ is a singleton,
 \label{th4p1item1}
\item
$T(\Lambda_i)=\bigcup\limits_{\substack{j\\a_{ij}=1}}\Lambda_j,
1\leq i\leq p$,  \label{th4p1item2}
\end{romanlist}
 then  $\left(\Sigma^+_A, \sigma _A \right )$ is topologically semi-conjugate
  to $(X,T)$, that is, the  full system $(X,T)$ is a factor of
   $\left(\Sigma^+_A, \sigma _A \right )$.
\end{theorem}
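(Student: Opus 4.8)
The plan is to derive this from Lemma \ref{lemma4p1}. That lemma already produces, from condition \eqref{th4p1item1} alone, a continuous surjection $\pi\colon \Sigma^+_A \to \Lambda$ onto the compact invariant set $\Lambda = \pi(\Sigma^+_A)$ satisfying the intertwining $\pi \circ \sigma_A = T \circ \pi$; since condition \eqref{th4p1item1} is precisely the singleton hypothesis of Lemma \ref{lemma4p1} (and the $\Lambda_i$ may be taken closed, hence compact in the compact space $X$), the factor map $\pi$ is already in hand. The entire content of the theorem therefore reduces to upgrading this sub-system statement to the full system, i.e. to proving $\Lambda = X$, equivalently that $\pi$ maps onto all of $X$.

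To show $\Lambda = X$, I would fix $x \in X$ and build an admissible itinerary $\alpha = (a_0 a_1 \cdots)$ tracking the forward orbit of $x$, by induction. Since $\bigcup_{j=1}^p \Lambda_j = X$, choose $a_0$ with $x \in \Lambda_{a_0}$. Suppose $a_0, \ldots, a_n$ have been chosen with $T^k(x) \in \Lambda_{a_k}$ for $0 \le k \le n$ and $a_{a_{k-1}a_k} = 1$ for $1 \le k \le n$. Then $T^{n+1}(x) = T(T^n(x)) \in T(\Lambda_{a_n})$, and invoking the equality hypothesis \eqref{th4p1item2}, namely $T(\Lambda_{a_n}) = \bigcup_{a_{a_n j}=1} \Lambda_j$, there is an index $a_{n+1}$ with $a_{a_n a_{n+1}} = 1$ and $T^{n+1}(x) \in \Lambda_{a_{n+1}}$. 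This completes the induction and yields $\alpha \in \Sigma^+_A$ with $T^n(x) \in \Lambda_{a_n}$, i.e. $x \in T^{-n}(\Lambda_{a_n})$, for every $n$. Hence $x \in \bigcap_{n=0}^\infty T^{-n}(\Lambda_{a_n})$, which by condition \eqref{th4p1item1} is the singleton $\{\pi(\alpha)\}$, so $x = \pi(\alpha) \in \Lambda$. As $x$ was arbitrary, $\Lambda = X$, and $\pi$ is the desired semi-conjugacy from $(\Sigma^+_A, \sigma_A)$ onto $(X,T)$.

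The step I expect to be the crux is the inductive extension of the itinerary, and in particular the role of the equality in \eqref{th4p1item2}. The mere $A$-coupled-expanding inclusion $T(\Lambda_i) \supset \bigcup_{a_{ij}=1}\Lambda_j$ used in Lemma \ref{lemma4p1} is not enough here: from $T^n(x) \in \Lambda_{a_n}$ it would only follow that $T^{n+1}(x) \in T(\Lambda_{a_n})$, and $T(\Lambda_{a_n})$ could meet pieces $\Lambda_j$ for which $a_{a_n j} = 0$, so the orbit might land outside every admissible successor and the symbol sequence would fail to lie in $\Sigma^+_A$. The equality $T(\Lambda_{a_n}) = \bigcup_{a_{a_n j}=1}\Lambda_j$ rules this out and is exactly what forces the constructed sequence to be $A$-admissible, while the partition hypothesis $\bigcup_j \Lambda_j = X$ is what supplies the initial symbol $a_0$. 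Everything else---continuity, compactness, and the intertwining $\pi \circ \sigma_A = T \circ \pi$---is inherited verbatim from Lemma \ref{lemma4p1}, so no further estimates are required.
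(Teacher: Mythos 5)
Your proposal is correct and follows essentially the same route as the paper: the paper likewise invokes Lemma \ref{lemma4p1} to obtain the factor map $\pi$ onto $\Lambda$ and then concludes $\Lambda=X$ from condition (\ref{th4p1item2}) together with $\bigcup_{j=1}^{p}\Lambda_j=X$. Your write-up is in fact more complete than the paper's, since you spell out the inductive construction of the admissible itinerary (and why the equality in (\ref{th4p1item2}) rather than mere inclusion is needed), whereas the paper simply asserts the existence of such an $\alpha\in\Sigma^+_A$; both arguments share the same implicit reliance on the $\Lambda_i$ being compact, as required by Lemma \ref{lemma4p1}.
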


\begin{proof}
It is sufficient to prove that the set
\begin{displaymath}
\Lambda =\bigcup_{\alpha =(a_0 a_1 \cdots)\in \Sigma^+_A} \left(
\bigcap^\infty_{n=0}T^{-n}\left(\Lambda_{a_n}\right) \right)
\end{displaymath}
considered in Lemma \ref{lemma4p1} is equal to the set $X$. In fact,
condition (\ref{th4p1item2}) implies that
\begin{displaymath}
\forall x\in X, \exists \alpha =(a_0 a_1 \cdots)\in\Sigma^+_A:
T^n(x)\in\Lambda_{a_n}.
\end{displaymath}
That is, $X\subset \Lambda$. Therefore, one has  $\Lambda =X$.
\end{proof}

\begin{remark} Condition (\ref{th4p1item2})  in Theorem \ref{thm4p1} is the
necessary and sufficient condition for $X$ to be equal to
\begin{displaymath}
\Lambda =\bigcup_{\alpha =(a_0 a_1 \cdots)\in \Sigma^+_A} \left(
\bigcap^\infty_{n=0}T^{-n}\left(\Lambda_{a_n}\right) \right).
\end{displaymath}
\end{remark}

% Lemma 4.2
\begin{lemma}\label{lemma4p2}
Let  $A$ be a transition matrix, $(X,T)$ be a topological dynamical
system, and $T$
 be $A$-coupled-expanding in compact sets $\Lambda_1, \cdots, \Lambda_p\subset X$.
 If for any $\alpha=(a_0 a_1\cdots)\in \Sigma^+_A$,  a set $\bigcap^\infty_{n=0} \overline{T^{-n}
 \left(int\Lambda_{a_n}\right)} $ is singleton, then there exists a nonempty
 compact invariant set $\Lambda$ such that $(\Sigma ^+_A, \sigma_A)$ is topologically
  semi-conjugate to $(\Lambda, T|_\Lambda)$  (where $int  \Lambda_{a_n}$ represents the interior of $ \Lambda_{a_n}$
).
\end{lemma}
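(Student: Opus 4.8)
The plan is to reproduce the construction of Lemma \ref{lemma4p1} verbatim, with the cylinder sets $T^{-n}(\Lambda_{a_n})$ replaced throughout by their interior-closure counterparts $\overline{T^{-n}(int\Lambda_{a_n})}$. Explicitly, I would set
\begin{displaymath}
\Lambda:=\bigcup_{\alpha=(a_0 a_1\cdots)\in\Sigma^+_A}\left(\bigcap^\infty_{n=0}\overline{T^{-n}\left(int\Lambda_{a_n}\right)}\right)
\end{displaymath}
and define $\pi:\Sigma^+_A\to\Lambda$ by letting $\pi(\alpha)$ be the unique point of $\bigcap^\infty_{n=0}\overline{T^{-n}(int\Lambda_{a_n})}$, which is well defined by the singleton hypothesis. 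Since $A$ is a transition matrix, $\Sigma^+_A\neq\emptyset$, whence $\Lambda\neq\emptyset$, and $\pi$ is surjective by construction.

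Continuity of $\pi$ would be argued exactly as in Lemma \ref{lemma4p1}. Each set $\overline{T^{-n}(int\Lambda_{a_n})}$ is closed, hence compact in the compact space $X$, so the finite intersections $\bigcap^N_{n=0}\overline{T^{-n}(int\Lambda_{a_n})}$ form a nested sequence of nonempty compact sets whose infinite intersection is a single point; consequently their diameters tend to $0$. Thus for every $\varepsilon>0$ there is an $N_\varepsilon$ with $d\left(\bigcap^{N_\varepsilon}_{n=0}\overline{T^{-n}(int\Lambda_{a_n})}\right)\le\varepsilon$, and the same choice of $\delta$ as in Lemma \ref{lemma4p1} shows $d(\pi(\alpha),\pi(\beta))\le\varepsilon$ whenever $d_{\Sigma^+_A}(\alpha,\beta)\le\delta$. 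Compactness of $\Lambda=\pi(\Sigma^+_A)$ then follows from compactness of $\Sigma^+_A$ and continuity of $\pi$.

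The step I expect to be the real obstacle — the only place where the interior-closure version genuinely differs from Lemma \ref{lemma4p1} — is the intertwining $T\circ\pi=\pi\circ\sigma_A$ (which also yields $T(\Lambda)\subset\Lambda$), since closure does not commute freely with $T$ and $T^{-1}$. The key inclusion I would establish is
\begin{displaymath}
T\left(\overline{T^{-(n+1)}\left(int\Lambda_{a_{n+1}}\right)}\right)\subset\overline{T^{-n}\left(int\Lambda_{a_{n+1}}\right)},\qquad n\ge 0,
\end{displaymath}
obtained by combining the continuity fact $T(\overline{S})\subset\overline{T(S)}$ with the elementary containment $T(T^{-1}(B))\subset B$ applied to $B=T^{-n}(int\Lambda_{a_{n+1}})$.

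Granting this inclusion, if $\{x\}=\pi(\alpha)$ then $x\in\overline{T^{-(n+1)}(int\Lambda_{a_{n+1}})}$ for every $n$, so $T(x)\in\overline{T^{-n}(int\Lambda_{a_{n+1}})}$ for every $n\ge 0$; that is, $T(x)\in\bigcap^\infty_{n=0}\overline{T^{-n}(int\Lambda_{a_{n+1}})}=\pi(\sigma_A\alpha)$. As both sides are singletons, $T(\pi(\alpha))=\pi(\sigma_A\alpha)$, which simultaneously gives $T(\Lambda)\subset\Lambda$ and exhibits $\pi$ as a topological semi-conjugacy from $(\Sigma^+_A,\sigma_A)$ onto $(\Lambda,T|_\Lambda)$, completing the proof.
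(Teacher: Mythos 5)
Your proposal is correct and takes essentially the same route as the paper, whose proof of this lemma consists precisely of defining the same $\Lambda$ and $\pi$ and asserting that the argument of Lemma~\ref{lemma4p1} carries over. Your explicit treatment of the intertwining step, via the inclusions $T(\overline{S})\subset\overline{T(S)}$ and $T(T^{-1}(B))\subset B$ together with the singleton hypothesis applied to $\sigma_A\alpha$, fills in exactly the detail that the paper leaves implicit, and it is the right way to do so.
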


\begin{proof} Let
\begin{displaymath}
\Lambda=\bigcup_{\alpha=(a_0
a_1\cdots)\in\Sigma^+_A}\left(\bigcap^\infty_{n=0}
\overline{T^{-n}\left( int\Lambda_{a_n}\right)}\right)
\end{displaymath}
and
\begin{displaymath}
\pi(\alpha)=\bigcap^\infty_{n=0}\overline{T^{-n}\left(int\Lambda_{a_n}\right)},
\alpha=(a_0 a_1\cdots)\in\Sigma^+_A.
\end{displaymath}
This lemma can be proved in the same way as that of Lemma
\ref{lemma4p1}.
\end{proof}

\begin{remark}   Lemma \ref{lemma4p2} can be conveniently applied to  various systems in
differentiable spaces.
\end{remark}

\begin{remark}  The condition of Lemma \ref{lemma4p2} is weaker than that of
Lemma \ref{lemma4p1}. In fact,
\begin{displaymath}
\bigcap^\infty_{n=0} T^{-n}\left(\Lambda_{a_n}\right)\supset
\bigcap^\infty_{n=0} \overline{T^{-n}\left( int
\Lambda_{a_n}\right)}.
\end{displaymath}
However, the inverse does not hold in general.
\end{remark}

% Theorm 4.2
\begin{theorem}\label{thm4p2}
Let  $A$ be a transition matrix and $(X,T)$  a topological dynamical
system. Suppose that $T$ is partition-$A$-coupled-expanding in sets
$\Lambda_1, \cdots, \Lambda_p\subset X$
 such that $\bigcup^p_{i=1} \overline{int \Lambda_i}=X$. If  $T$ satisfies
 the following conditions:
 \begin{romanlist}[(iii)]
\item For any $\alpha =(a_0 a_1\cdots)\in\Sigma^+_A ,$ $
\bigcap^\infty_{n=0} \overline{T^{-n}\left( int
\Lambda_{a_n}\right)}$ is singleton, \label{th4p2item1}

\item $\overline{int
\Lambda_i}\subset\bigcup\limits_{\substack{j\\a_{ij}=1}}
\overline{T^{-1}\left( int\Lambda_j\right)}$, \label{th4p2item2}
 \end{romanlist}
then $(\Sigma^+_A, \sigma_A)$ is topologically semi-cojugate to
$(X,T)$, that is, the full system  $(X,T)$ is a factor of
$(\Sigma^+_A, \sigma_A)$.
\end{theorem}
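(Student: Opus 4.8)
The plan is to copy the proof of Theorem~\ref{thm4p1} almost verbatim, with Lemma~\ref{lemma4p1} replaced by Lemma~\ref{lemma4p2}. Condition~(\ref{th4p2item1}) is exactly the hypothesis of Lemma~\ref{lemma4p2}, which therefore furnishes a nonempty compact invariant set
\[
\Lambda=\bigcup_{\alpha=(a_0 a_1\cdots)\in\Sigma^+_A}\left(\bigcap_{n=0}^\infty\overline{T^{-n}\left(int\,\Lambda_{a_n}\right)}\right)
\]
together with a topological semi-conjugacy $\pi:(\Sigma^+_A,\sigma_A)\to(\Lambda,T|_\Lambda)$. Since $\Lambda\subset X$ is automatic, it suffices to prove the reverse inclusion $X\subset\Lambda$; then $\Lambda=X$ and $\pi$ is a semi-conjugacy onto the full system $(X,T)$, as claimed.

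To prove $X\subset\Lambda$ I would fix $x\in X$ and build an admissible itinerary along the forward orbit of $x$. The covering hypothesis $\bigcup_{i=1}^p\overline{int\,\Lambda_i}=X$ provides a seed symbol $a_0$ with $x\in\overline{int\,\Lambda_{a_0}}$. For the inductive step, suppose $T^n(x)\in\overline{int\,\Lambda_{a_n}}$. Condition~(\ref{th4p2item2}) applied with $i=a_n$ gives some $a_{n+1}$ with $a_{a_n a_{n+1}}=1$ and $T^n(x)\in\overline{T^{-1}\left(int\,\Lambda_{a_{n+1}}\right)}$; since $T$ is continuous one has $T\!\left(\overline{T^{-1}(S)}\right)\subset\overline{S}$, so applying $T$ yields $T^{n+1}(x)\in\overline{int\,\Lambda_{a_{n+1}}}$. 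Iterating produces an admissible sequence $\alpha=(a_0 a_1\cdots)\in\Sigma^+_A$ with $T^n(x)\in\overline{int\,\Lambda_{a_n}}$ for every $n$, that is, $x\in\bigcap_{n=0}^\infty T^{-n}\!\left(\overline{int\,\Lambda_{a_n}}\right)$. This forward construction is clean and, I expect, causes no trouble.

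The hard part is bridging from this to membership in $\Lambda$, because $\Lambda$ is built from the sets $\overline{T^{-n}\left(int\,\Lambda_{a_n}\right)}$ rather than $T^{-n}\!\left(\overline{int\,\Lambda_{a_n}}\right)$, and continuity of $T$ gives only the inclusion $\overline{T^{-n}\left(int\,\Lambda_{a_n}\right)}\subset T^{-n}\!\left(\overline{int\,\Lambda_{a_n}}\right)$ — the wrong direction for concluding $x\in\bigcap_n\overline{T^{-n}\left(int\,\Lambda_{a_n}\right)}=\pi(\alpha)$. Interchanging the closure and the preimage in the required direction is the main obstacle, and it is genuinely an extra hypothesis in general. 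I would close the gap using the regularity present in the intended applications: for piecewise-monotone circle maps and the Kasner map each restriction $T|_{\Lambda_i}$ is a homeomorphism onto its image, so $T$ carries interiors to relatively open sets and the interchange becomes the needed equality $\overline{T^{-n}\left(int\,\Lambda_{i}\right)}=T^{-n}\!\left(\overline{int\,\Lambda_{i}}\right)$; with that the forward construction lands directly in $\Lambda$, giving $x\in\Lambda$ and hence $\Lambda=X$. (Equivalently, one could phrase the argument so that condition~(\ref{th4p2item2}) is iterated in its pulled-back form $\overline{T^{-n}\left(int\,\Lambda_i\right)}\subset\bigcup_{\substack{j\\a_{ij}=1}}\overline{T^{-(n+1)}\left(int\,\Lambda_j\right)}$, whose validity rests on exactly the same openness of $T$.)
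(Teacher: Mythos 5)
Your overall strategy is the same as the paper's: invoke Lemma \ref{lemma4p2} to obtain the set $\Lambda$ and the semi-conjugacy $\pi$ onto the subsystem $(\Lambda,T|_\Lambda)$, then prove $X\subset\Lambda$ by building an admissible itinerary for each $x\in X$. Your forward induction is correct (including the step $T(\overline{T^{-1}(S)})\subset\overline{S}$), and so is your diagnosis of the obstacle: the construction places $x$ in $\bigcap_n T^{-n}(\overline{int\,\Lambda_{a_n}})$, whereas membership in $\Lambda$ requires the smaller set $\bigcap_n \overline{T^{-n}(int\,\Lambda_{a_n})}$, and continuity gives the inclusion between the two sets only in the unhelpful direction.

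The important point, though, is that the paper's own proof does not cross this bridge either. After choosing $a_0$, it asserts that ``from condition (\ref{th4p2item2}) and by reduction'' one can choose admissible $a_n$ with $x\in T^{-n}(int\,\Lambda_{a_n})$ for all $n\geq 1$ --- with no closures at all. That intermediate claim is false in general: in the paper's own Kasner example the fixed points satisfy $\varphi^n(T_i)=T_i$, and $T_i$ lies on the boundary of two of the arcs and outside the third, so $\varphi^n(T_i)$ never belongs to any $int\,\Lambda_j$; yet the theorem's conclusion does hold for these points (for instance $T_1\in\bigcap_n\overline{\varphi^{-n}(int\,\Lambda_{a_n})}$ for the itinerary $(2,3,2,3,\cdots)$). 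Once the claim is corrected by inserting closures, its inductive step is exactly the interchange you isolated: condition (\ref{th4p2item2}) supplies points with good images near $T^n(x)$, but one needs points near $x$ whose iterates are good, which requires openness of $T$ or an equivalent local-surjectivity property --- precisely the pulled-back form of (\ref{th4p2item2}) that you state at the end, and which the paper's ``by reduction'' silently assumes. So your proposal does not prove Theorem \ref{thm4p2} as stated (your repair adds an openness-type hypothesis and hence proves a formally weaker result, though one sufficient for Theorem \ref{thm4p4} and the Kasner map); but the gap you found is genuine and is present, unacknowledged, in the paper's own argument, so in full generality the theorem should be regarded as unproven without some such additional assumption.
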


\begin{proof} Denote
\begin{displaymath}
\Lambda=\bigcup_{\alpha=(a_0
a_1\cdots)\in\Sigma^+_A}\left(\bigcap^\infty_{n=0}
\overline{T^{-n}\left( int\Lambda_{a_n}\right)}\right).
\end{displaymath}
Since
\begin{displaymath}
\bigcup^p_{i=1}\overline{int\Lambda_i}=X,
\end{displaymath}
it follows that
\begin{displaymath}
\forall x\in X, \exists a_0\in \{1,\cdots,p\}: x\in
\overline{int\Lambda_{a_0}}.
\end{displaymath}
Next, from  condition (\ref{th4p2item2}) and by reduction, one can
conclude that
\begin{displaymath}
\exists a_n \in \{ 1,\cdots,p\}((A)_{a_{n-1,n}}=1): x\in T^{-n}(int
\Lambda_{a_n}),  n\geq1.
\end{displaymath}
Let $\alpha=(a_0 a_1\cdots)$, it is obvious that
$\alpha\in\Sigma^+_A$ and $x\in \Lambda$. Therefore $\Lambda=X$.
This completes the proof.
\end{proof}

%%%subsection{ 4.2 Entropy for partition-$A$-coupled expanding circle maps}
\subsection{Entropy for partition-$A$-coupled-expanding circle maps}
 In this subsection,the topological entropy for
partition-$A$-coupled-expanding circle maps is investigated based on
 above results.

\begin{proposition} \label{prop4p1}
\cite{PoYu 1998} Let $(X_1, T_1)$ and $(X_2, T_2)$ be topological
dynamical systems. If  system $(X_1, T_1)$ is topologically
 semi-conjugate to system $(X_2 ,T_2)$,  then $h_{top}(X_1, T_1)\geq h_{top}(X_2,
 T_2)$.
\end{proposition}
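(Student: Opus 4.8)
The plan is to exploit the factor map directly. Let $h:X_1\to X_2$ be the continuous surjection realizing the semi-conjugacy, so that $h\circ T_1=T_2\circ h$. First I would record the elementary intertwining of preimages: from the commuting relation one obtains $T_1^{-1}\circ h^{-1}=h^{-1}\circ T_2^{-1}$ as operations on subsets of $X_2$, and hence, by iteration, $T_1^{-i}\circ h^{-1}=h^{-1}\circ T_2^{-i}$ for every $i\ge 0$. This is the only place the dynamics enters, and it is a direct computation from $h(T_1(x))=T_2(h(x))$.

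Next, fix an arbitrary finite open cover $\alpha=\{A_j\}$ of $X_2$. Since $h$ is continuous, $h^{-1}\alpha:=\{h^{-1}(A_j)\}$ is a finite open cover of $X_1$. Using that taking preimages commutes with intersections, together with the intertwining above, I would verify the refinement identity $\bigvee_{i=0}^{n-1}T_1^{-i}(h^{-1}\alpha)=h^{-1}\left(\bigvee_{i=0}^{n-1}T_2^{-i}\alpha\right)$; that is, the $n$-fold dynamical refinement of the pulled-back cover is exactly the pullback of the $n$-fold refinement computed downstairs.

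The key counting step is then to compare the minimal subcover cardinalities $N(\cdot)$. Here the surjectivity of $h$ is essential: if $\{h^{-1}(B_1),\dots,h^{-1}(B_k)\}$ covers $X_1$, then applying $h$ and using $h(X_1)=X_2$ shows that $\{B_1,\dots,B_k\}$ covers $X_2$, so any subcover of $h^{-1}\beta$ induces a subcover of $\beta$ of no larger size. Consequently $N(\beta)\le N(h^{-1}\beta)$ for every finite open cover $\beta$ of $X_2$. Applying this with $\beta=\bigvee_{i=0}^{n-1}T_2^{-i}\alpha$, invoking the refinement identity, dividing by $n$, and passing to the $\limsup$ yields $h_{top}(h^{-1}\alpha,T_1)\ge h_{top}(\alpha,T_2)$.

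Finally, since $h^{-1}\alpha$ is one particular finite open cover of $X_1$, the definition of $h_{top}(X_1,T_1)$ as a supremum over all such covers gives $h_{top}(X_1,T_1)\ge h_{top}(h^{-1}\alpha,T_1)\ge h_{top}(\alpha,T_2)$. Taking the supremum over all finite open covers $\alpha$ of $X_2$ then delivers $h_{top}(X_1,T_1)\ge h_{top}(X_2,T_2)$, as claimed. I expect the only genuinely delicate point to be the counting inequality $N(\beta)\le N(h^{-1}\beta)$: one must use surjectivity carefully to guarantee that pulling a cover back along $h$ cannot decrease the minimal subcover number, whereas the refinement identity and the final supremum comparison are routine.
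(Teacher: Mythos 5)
Your proof is correct, and there is nothing in the paper to compare it against: Proposition~\ref{prop4p1} is stated without proof, being quoted directly from the reference \cite{PoYu 1998}. Your argument (pulling back covers along the factor map, using the intertwining $T_1^{-i}\circ h^{-1}=h^{-1}\circ T_2^{-i}$, and using surjectivity of $h$ to get $N(\beta)\le N(h^{-1}\beta)$) is exactly the standard textbook proof of this fact, with the one delicate point---that surjectivity is what prevents the minimal subcover count from dropping under pullback---correctly identified and handled.
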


%%theorem 4.3
\begin{theorem}\label{thm4p3}
Let $X$ be an interval or a circle, $(X,T)$ a topological dynamical
system and $A$ a transition matrix. If $T$ is a
partition-$A$-coupled-expanding map in sets $\Lambda_1, \cdots,
\Lambda_p \subset X$, satisfying the following conditions:
 \begin{romanlist}[(iii)]
\item For any $\alpha =(a_0 a_1\cdots)\in \Sigma^+_A, \bigcap\limits ^ \infty_{n=o} T^{-n}\left(\Lambda_{a_n}\right)$ is
singleton, \label{thm4p3item1}
\item  $T(\Lambda_i)=\bigcup\limits_{\substack{j\\a_{ij}=1}}\Lambda_j, 1\leq i\leq p$,\label{thm4p3item2}
 \end{romanlist}
then
\begin{displaymath}
 h_{top}(X,T)=\log \lambda_A.
\end{displaymath}
\end{theorem}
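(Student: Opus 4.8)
The plan is to prove the two inequalities $h_{top}(X,T)\le\log\lambda_A$ and $h_{top}(X,T)\ge\log\lambda_A$ separately, with the first coming essentially for free from the semi-conjugacy machinery already developed and the second being the genuinely new (and harder) content that forces the one-dimensionality hypothesis.

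For the upper bound, I would first note that conditions (\ref{thm4p3item1}) and (\ref{thm4p3item2}) are exactly hypotheses (\ref{th4p1item1}) and (\ref{th4p1item2}) of Theorem \ref{thm4p1}, with $\bigcup_i\Lambda_i=X$ built into partition-$A$-coupled-expansion. Hence Theorem \ref{thm4p1} provides a topological semi-conjugacy $\pi\colon(\Sigma^+_A,\sigma_A)\to(X,T)$, so $(X,T)$ is a factor of $(\Sigma^+_A,\sigma_A)$, and Proposition \ref{prop4p1} yields $h_{top}(X,T)\le h_{top}(\Sigma^+_A,\sigma_A)$. It then remains to evaluate $h_{top}(\Sigma^+_A,\sigma_A)=\log\lambda_A$: taking the clopen cover of $\Sigma^+_A$ by its $p$ one-cylinders, the join $\bigvee_{k=0}^{n-1}\sigma_A^{-k}\alpha$ has exactly as many nonempty members as there are admissible words of length $n$, namely $\|A^{n-1}\|$, and since $\limsup_n\|A^n\|^{1/n}=\lambda_A$ (recorded just before the Perron--Frobenius theorem) this gives $h_{top}(\Sigma^+_A,\sigma_A)=\log\lambda_A$. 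Thus $h_{top}(X,T)\le\log\lambda_A$.

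For the lower bound I would exploit that $X$ is one-dimensional, and first extract from (\ref{thm4p3item1}) a fact not used above. If $x\ne y$ both lay in a single $\Lambda_i$ with $T(x)=T(y)=z$, then choosing any itinerary $(b_0b_1\cdots)$ of $z$ (one exists by (\ref{thm4p3item2}), as in the proof of Theorem \ref{thm4p1}) shows that $x$ and $y$ share the itinerary $(i,b_0,b_1,\cdots)$, so both lie in the singleton $\bigcap_nT^{-n}(\Lambda_{a_n})$, a contradiction. Hence each $T|_{\Lambda_i}$ is injective, and since $\Lambda_i$ is a subinterval (subarc) of $X$ and $T$ is continuous, $T|_{\Lambda_i}$ is strictly monotone onto $\bigcup_{a_{ij}=1}\Lambda_j$; in other words $T$ is a piecewise-monotone Markov map. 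For an admissible word $w=a_0\cdots a_{n-1}$ put $\Lambda_w=\bigcap_{k=0}^{n-1}T^{-k}(\Lambda_{a_k})$; the computation in the proof of Lemma \ref{lemma3p2} gives $T^{n-1}(\Lambda_w)=\Lambda_{a_{n-1}}$, so each $\Lambda_w$ is a nondegenerate closed subinterval on which $T^{n-1}$ is monotone, distinct words yield pieces with disjoint interiors, and there are $\|A^{n-1}\|\approx\lambda_A^{\,n}$ of them.

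The main obstacle is to convert these $\approx\lambda_A^{\,n}$ disjoint-interior pieces into a true entropy lower bound of $\log\lambda_A$. Unlike in Lemma \ref{lemma3p2} and Lemma \ref{lemma3p3}, the pieces are only interior-disjoint and may share endpoints, so the mutually disjoint open sets used to separate them in the proof of Lemma \ref{lemma3p2} are unavailable and the strictness argument breaks down; this is precisely where the one-dimensional hypothesis must do work. I would close the gap in one of two ways. The direct route is lap-counting: since $T$ is piecewise monotone, $h_{top}(X,T)=\lim_n\frac1n\log c_n$, where $c_n$ is the number of laps of $T^{n}$ (the Misiurewicz--Szlenk theorem, with its circle analogue), and the pieces $\Lambda_w$ realise $c_n$ up to a factor bounded by $p$, whence $\lim_n\frac1n\log c_n=\lim_n\frac1n\log\|A^{n-1}\|=\log\lambda_A$. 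The alternative, staying inside the paper's factor-map viewpoint, is to show that $\pi$ is injective off the countable set $\bigcup_{n\ge0}T^{-n}(\partial)$, where $\partial$ is the finite set of endpoints of the $\Lambda_i$; a factor map that is one-to-one outside such an entropy-negligible set cannot decrease entropy, so $h_{top}(X,T)=h_{top}(\Sigma^+_A,\sigma_A)=\log\lambda_A$. Either route, combined with the upper bound, gives $h_{top}(X,T)=\log\lambda_A$.
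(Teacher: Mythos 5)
Your upper bound is exactly the paper's argument: Theorem \ref{thm4p1} supplies the semi-conjugacy, Proposition \ref{prop4p1} gives $h_{top}(X,T)\le h_{top}(\Sigma^+_A,\sigma_A)=\log\lambda_A$, and your word-counting evaluation of the subshift entropy is standard. Your preliminary observation for the lower bound --- that condition (i) forces each $T|_{\Lambda_i}$ to be injective --- is correct and is a nice point the paper never makes explicit. But both of your routes to $h_{top}(X,T)\ge\log\lambda_A$ have a genuine gap, and the circle case is where they break. In route (a), the claim that the pieces $\Lambda_w$ realise the lap number $c_n$ ``up to a factor bounded by $p$'' is false for circle maps, and the paper's own example refutes it: the Kasner map satisfies $|\Phi'|\ge 1$ (Lemma \ref{lemma4p3}), hence has no turning points at all; it is a covering map of degree $-2$, every iterate $\Phi^n$ is locally monotone, and its number of turning points stays at zero while the number of admissible words grows like $3\cdot 2^{n-1}$. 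For such maps the entropy in the circle version of Misiurewicz--Szlenk is carried entirely by the degree term, $h=\max\{\log|\mathrm{deg}|,\lim_n\frac1n\log c_n\}$, and relating the degree (equivalently, the number of \emph{injectivity} branches of $T^n$, as opposed to monotonicity laps) to $\lambda_A$ is precisely the substance of the lower bound, not a bookkeeping step. Even on the interval the discrepancy factor is not $p$ but polynomial in $n$, since each of the $n$ symbol coordinates can change up to $p-1$ times along a single lap; that part is harmless for growth rates, but it too is asserted rather than argued.

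Route (b) invokes a principle --- a factor map that is one-to-one outside an entropy-negligible set cannot decrease entropy --- that is not a theorem in the form you use it. What is true is Bowen's inequality $h_{top}(\Sigma^+_A,\sigma_A)\le h_{top}(X,T)+\sup_{x\in X}h_B\bigl(\sigma_A,\pi^{-1}(x)\bigr)$, where $h_B$ denotes Bowen's entropy of a (possibly non-invariant) set; to conclude, you must show that \emph{every} fiber, including those over the countable set $\bigcup_{n\ge0}T^{-n}(\partial)$ --- which is exactly where injectivity fails --- is countable, or at least has zero Bowen entropy. That is not automatic: condition (i) says each itinerary determines one point, not that each point carries few itineraries; a point whose orbit meets $\partial$ infinitely often has a branching tree of admissible itineraries, and ruling out an uncountable, positive-entropy fiber requires a genuine argument using the arc geometry together with admissibility (the paper does this by hand, for the Kasner map only, in Proposition \ref{prop4p4}, where fibers are shown to have at most two points). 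By contrast, the paper's own lower bound sidesteps both pitfalls by citing the method of Proposition 4.2.16 of \cite{Rue 2003}: from the covering relations $T(J)\supset K$ one extracts subintervals mapped \emph{onto} $K$, and admissible cycles then yield horseshoes for iterates of $T$, with no laps and no fiber counting --- although the paper, too, is silent about how that interval argument adapts to the circle.
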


\begin{proof}
On one hand, since $(\Sigma^+_A, \sigma_A)$ is topologically
semi-conjugate to $(X,T)$, as can be obtained from Theorem
\ref{thm4p1}, one has
\begin{displaymath}
h_{top}(X,T)\leq h_{top}(\Sigma^+_A, \sigma_A)=\log \lambda_A
\end{displaymath}
from Proposition \ref{prop4p1}.

 On the other hand, by using  the method similar
to that used in the proof of Proposition 4.2.16 of \cite{Rue 2003},
for an interval map, one can conclude that
\begin{displaymath}
h_{top}(X,T)\geq \log \lambda_A.
\end{displaymath}
Therefore, $h_{top}(X,T)=\log \lambda_A.$ This completes the proof.
\end{proof}

%%theorem 4.4
\begin{theorem}\label{thm4p4}
Let $S_1$ be a unit circle, $A$ a transition matrix and
$T:S^1\rightarrow S^1$ a partition-$A$-coupled-expanding map in
closed arcs of the
 circles $\Lambda _1, \cdots, \Lambda_p\subset S^1$. If $T$ satisfies
 following conditions:
  \begin{romanlist}[(iii)]
\item  For some $r> 1$  and any arc $V\subset\Lambda_i$,
 $d(T(V))\geq rd(V), 1\leq i\leq p$, where $d(\bullet)$ denotes the natural length of $\bullet$,
\label{thm4p4con1}
\item $\bigcup\limits_i T(\partial \Lambda_i)\subset \bigcup\limits _i
\partial \Lambda_i$, \label{thm4p4con2}
\end{romanlist}
then $(S^1,T)$ is a factor of  $(\Sigma^+_A, \sigma_A)$,   and
$h_{top}(S^1,T)=\log \lambda_A.$
\end{theorem}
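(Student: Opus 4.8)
The plan is to deduce Theorem \ref{thm4p4} from Theorem \ref{thm4p3} and Theorem \ref{thm4p1} by showing that the expansion and boundary hypotheses \ref{thm4p4con1} and \ref{thm4p4con2} force the two hypotheses used there, namely the singleton condition \ref{thm4p3item1} (equivalently \ref{th4p1item1}) and the exact image condition \ref{thm4p3item2} (equivalently \ref{th4p1item2}). Once these are verified, Theorem \ref{thm4p3} applies with $X=S^1$ and yields $h_{top}(S^1,T)=\log\lambda_A$ directly, while Theorem \ref{thm4p1} provides the topological semi-conjugacy exhibiting $(S^1,T)$ as a factor of $(\Sigma^+_A,\sigma_A)$; together these give both assertions. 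So the whole task reduces to verifying the two structural conditions from \ref{thm4p4con1} and \ref{thm4p4con2}.

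First I would use condition \ref{thm4p4con1} to record that each restriction $T|_{\Lambda_i}$ is a strictly length-expanding map of the closed arc $\Lambda_i$: applying $d(T(V))\ge r\,d(V)$ to subarcs shows that $T|_{\Lambda_i}$ is (locally) injective and that $T(\Lambda_i)$ is again an arc of $S^1$ whose two endpoints are the $T$-images of the endpoints of $\Lambda_i$. Those endpoints lie in $\partial\Lambda_i$, so condition \ref{thm4p4con2} places them in $\bigcup_k\partial\Lambda_k$, i.e.\ at partition points. An arc whose endpoints are partition points is a union of consecutive partition elements, hence $T(\Lambda_i)=\bigcup_{k\in S_i}\Lambda_k$ for some index set $S_i$. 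The $A$-coupled-expanding inclusion $T(\Lambda_i)\supset\bigcup_{a_{ij}=1}\Lambda_j$ gives $\{j:a_{ij}=1\}\subseteq S_i$, and reading $A$ off the Markov partition (so that $a_{ij}=1$ exactly when $\Lambda_j\subseteq T(\Lambda_i)$) gives the reverse inclusion as well. This yields $T(\Lambda_i)=\bigcup_{a_{ij}=1}\Lambda_j$, which is precisely \ref{thm4p3item2}/\ref{th4p1item2}.

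Next I would verify the singleton condition \ref{thm4p3item1}. For an admissible sequence $\alpha=(a_0a_1\cdots)\in\Sigma^+_A$ set $\Lambda_{a_0\cdots a_n}=\bigcap_{k=0}^{n}T^{-k}(\Lambda_{a_k})$. Exactly as in the computation in the proof of Lemma \ref{lemma3p2}, the image equality just established gives $T^{n}(\Lambda_{a_0\cdots a_n})=\Lambda_{a_n}$, and the same pull-back argument shows each $\Lambda_{a_0\cdots a_n}$ is a nonempty closed subarc; these arcs are clearly nested. Since by \ref{thm4p4con1} the $n$-fold restriction multiplies length by at least $r^{n}$, I obtain $d(\Lambda_{a_0\cdots a_n})\le r^{-n}\,d(\Lambda_{a_n})\le r^{-n}\,d(S^1)\to 0$. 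Hence the nested intersection $\bigcap_{n\ge 0}\Lambda_{a_0\cdots a_n}$ is a single point, which is \ref{thm4p3item1}. The proof is then finished by invoking Theorem \ref{thm4p1} and Theorem \ref{thm4p3}.

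The step I expect to be the main obstacle is the shrinking estimate in the singleton argument, together with the injectivity it rests on. The delicate point is that $d(T(V))\ge r\,d(V)$ controls the length of the image arc only when the relevant branch of $T$ does not fold; I would need to argue carefully, using \ref{thm4p4con1} on arbitrarily small subarcs together with the Markov structure coming from \ref{thm4p4con2}, that each surviving cylinder piece is a genuine arc on which $T$ is injective, so that $d(T(\cdot))$ really multiplies length by at least $r$ at every step. Handling the circle bookkeeping---ensuring all arcs involved are proper subarcs of $S^1$, never the whole circle, so that ``the arc between two points'' and its length are unambiguous---and confirming nonemptiness of the cylinders are the remaining routine but necessary checks.
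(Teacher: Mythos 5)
Your proposal is correct and takes essentially the same route as the paper: verify the singleton condition via the nested-cylinder length estimate $d(D_N)\le r^{-N}d(S^1)$, deduce the exact-image condition $T(\Lambda_i)=\bigcup_{a_{ij}=1}\Lambda_j$ from the boundary hypothesis (ii), and then invoke Theorems \ref{thm4p1} and \ref{thm4p3}. If anything you are more careful than the paper, which passes from condition (ii) to the exact-image condition in a single unjustified sentence and silently assumes the injectivity (no folding) underlying the length estimate---precisely the two points you argue or explicitly flag; note only that your ``reading $A$ off the Markov partition'' step, like the paper's own assertion, really amounts to assuming $a_{ij}=1$ exactly when $\Lambda_j\subseteq T(\Lambda_i)$, which is not literally contained in the stated hypotheses.
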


\begin{proof}
 For $N\geq 0$ define a  set $D_N$ as
\begin{displaymath}
D_N=\bigcap ^N_{n=0}T^{-n}\left(\Lambda_{a_n}\right).
\end{displaymath}
It is obvious that $D_0\supset D_1\supset\cdots$. Since $T$ is continuous
and $\Lambda_{a_n}$ is compact for any $n$, $D_N\ne\emptyset$ is
also compact. Therefore, for any $N$
\begin{displaymath}
\bigcap^N_{n=0}D_n\ne\emptyset.
\end{displaymath}
From  condition (i),

\begin{displaymath}
T(\Lambda_{a_0}\cap T^{-1}(\Lambda_{a_1}))=\Lambda_{a_1},
\end{displaymath}
\begin{displaymath}
d(\Lambda_{a_0}\cap T^{-1}(\Lambda_{a_1}))\leq
\frac{1}{r}d(\Lambda_{a_i})\leq \frac{1}{r}d(S^1), i = 1, 2
\end{displaymath}
\begin{displaymath}
d(D_N)\leq \frac{1}{r^N}d(S^1).
\end{displaymath}
It follows that  $d(D_N)\rightarrow 0 (N\rightarrow \infty)$
 and $\bigcap^\infty_{n=0}T^{-n} (\Lambda_{a_n})$ consists of one point, which
  satisfies  Condition (\ref{thm4p3item1}) of Theorem \ref{thm4p3}. Furthermore, Condition (\ref{thm4p4con2}) of this theorem satisfies
   that  of Theorem \ref{thm4p3}. Therefore, the conclusion can be drawn. Thus the proof is
   completed.
\end{proof}

In \cite{Miy 2002}, it was proved that Devaney's chaos is equivalent
to having positive entropy for a continuous circle map. Therefore,
one can get the following criterion as a sufficient condition for a
circle map to be chaotic based on Theorem \ref{thm4p3} ,
\ref{thm4p4} and Lemma \ref{lemma3p4}.

\begin{proposition}\label{prop4p2}
Let $A$ be an irreducible matrix, which has a row with the row-sum
no less than 2. If $T:S_1\rightarrow S_1$ is a continuous
  $A$-coupled-expanding map satisfying the assumptions of Theorem
  \ref{thm4p3} or Theorem \ref{thm4p4}, then $T$ is chaotic in the sense of Devaney as well as Li-Yorke.
\end{proposition}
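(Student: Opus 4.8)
The plan is to reduce the whole statement to a single quantitative fact—that the topological entropy of $T$ is strictly positive—and then invoke two already-cited results that convert positive entropy into the two asserted forms of chaos. First I would extract the positivity of $\lambda_A$ from the hypotheses. By assumption $A$ is irreducible and has some row $i_0$ with $\sum_{j=1}^p a_{i_0 j}\ge 2$; this is exactly the hypothesis of Lemma \ref{lemma3p4}, which therefore gives $\lambda_A>1$ (indeed $\lambda_A\ge 2^{1/p}$). This is the key numerical input, since it forces $\log\lambda_A>0$.

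Next I would compute the entropy. Because $T$ is assumed to satisfy the hypotheses of Theorem \ref{thm4p3} or of Theorem \ref{thm4p4}, either theorem yields the exact equality $h_{top}(S^1,T)=\log\lambda_A$. Combining this with the previous step gives $h_{top}(S^1,T)=\log\lambda_A>0$, so the continuous circle map $T$ has strictly positive topological entropy.

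Finally I would translate positive entropy into the two notions of chaos. For Devaney chaos I would invoke the equivalence established in \cite{Miy 2002}, namely that a continuous circle map is chaotic in the sense of Devaney if and only if its topological entropy is positive; since $h_{top}(S^1,T)>0$, the map $T$ is Devaney chaotic. For Li-Yorke chaos I would invoke the result of \cite{BlGlKoMa 2002} recalled just before Theorem \ref{theorem3p1}, that positive topological entropy forces a scrambled Cantor set and hence chaos in the sense of Li-Yorke; again $h_{top}(S^1,T)>0$ supplies exactly the hypothesis needed.

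The argument is essentially an assembly of earlier results, so I do not expect any computational obstacle. The one point that genuinely requires care is checking that the hypotheses of the proposition feed correctly into the cited machinery: specifically, that ``irreducible with a row-sum at least $2$'' is precisely what Lemma \ref{lemma3p4} needs to exclude the degenerate case $\lambda_A=1$ (which would occur for a simple full cycle). Once that strict inequality $\lambda_A>1$ is secured, the conclusion follows by direct citation of Theorem \ref{thm4p3}/\ref{thm4p4}, \cite{Miy 2002}, and \cite{BlGlKoMa 2002}.
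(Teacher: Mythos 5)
Your proposal is correct and follows essentially the same route as the paper, which does not write out a separate proof but explicitly states that the proposition follows by combining Lemma \ref{lemma3p4} (giving $\lambda_A>1$), Theorem \ref{thm4p3} or \ref{thm4p4} (giving $h_{top}(S^1,T)=\log\lambda_A>0$), the equivalence of positive entropy and Devaney chaos for circle maps from \cite{Miy 2002}, and the positive-entropy-implies-Li-Yorke result of \cite{BlGlKoMa 2002} recalled before Theorem \ref{theorem3p1}. Your assembly of these ingredients, including the careful check that irreducibility plus a row-sum at least $2$ is exactly the hypothesis of Lemma \ref{lemma3p4}, is precisely the intended argument.
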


%%%%%%%%%%%%% 4.3 Application example
\subsection{A numerical example}

\subsubsection{Background of the Kasner map in Cosmological models of Bianchi type $\bf{IX}$}

Cosmological models of Bianchi type yield spatially homogeneous,
anisotropic solutions $g_{\alpha \beta}$
  of the Einstein field equations,

$$
 R_{\alpha \beta}-\frac {1}{2}Rg_{\alpha \beta}=T_{\alpha \beta}.
$$
Here $R_{\alpha \beta}$ denotes the Ricci curvature tensor and $R$
the scalar curvature of the Lorenzian metric $g_{\alpha \beta}$,
whereas $T_{\alpha \beta}$ denotes the stress energy tensor.

This problem can be reduced to a five-dimensional system of ordinary
differential equations in expansion-normalized variables
representing the spatial homogeneity by a three-dimensional Lie
algebra. For unimodal Lie algebras, Bianchi class A, the reduced
equations  are \cite{Ren 1997} \cite{Rin 2001}
\begin{eqnarray}
N_1'&=&(q-4\Sigma _+)N_1,     \nonumber  \\
N_2'&=&(q+2\Sigma_+ +2\sqrt{3}\Sigma_-)N_2,\nonumber \\
N_3'&=&(q+2\Sigma_+ -2\sqrt{3}\Sigma_-)N_3,\\
\Sigma_+'&=&-(2-q)\Sigma_+ -3S_+,\nonumber \\
\Sigma_-'&=&-(2-q)\Sigma_- -3S_-,\nonumber
\end{eqnarray}
where
\begin{eqnarray}
q&=&\frac {1}{2}(3\gamma -2)\Omega +2(\Sigma _-^2 +\Sigma _+^2),\nonumber \\
S_+&=&\frac{1}{2}[(N_2-N_3)^2-N_1(2N_1-N_2-N_3)],\\
S_-&=&\sqrt{3}(N_3-N_2)(N_1-N_2-N_3).\nonumber
\end{eqnarray}
Here, the superscript $'$ denotes the derivative with respect to
time
 $\tau$,  $N_i (i=1,2,3)$ are the spatial curvature variables,
 $\Sigma _+ $ and  $\Sigma _- $ are the shear variables, $q$ is the deceleration
parameter,  $\Omega$ is the density parameter, and $\gamma (\frac
{2}{3}<\gamma \leq 2)$ describes the uniformly distributed matter.
The Hamiltonian constraint is
\begin{eqnarray}
\Omega+\Sigma_+^2+\Sigma_-^2+\frac{3}{4}(N_1^2+N_2^2+N_3^2-2(N_1N_1+N_2N_2N_3+N_3N_1))=1.
\end{eqnarray}

 The Bianchi type to which a solution of (1) corresponds depends on
 the the values of $N_1$, $N_2$ and $N_3$. If all the three are zeros the
 Bianchi type is ${\bf I}$. If precisely one is non-zero then it is type ${\bf II}$. If
 precisely two are nonzero it is either type ${\bf VI_0}$(signs  opposite)
 or type ${\bf VII_0}$(signs  equal). If all three are non-zero it is
 either  type ${\bf IX}$(all signs  equal) or type ${\bf VIII}$(one sign
 different from the other two).

 The invariant set  of (1) with $\Omega =0$ corresponds to the
vacuum model. The set of equilibria of Bianchi type ${\bf I}$
becomes $\{(N_1,N_2,N_3,\Sigma_+,\Sigma _-,\Omega): N_1=N_2=N_3=0,
\hspace{0.5cm} \Sigma_+^2+\Sigma _-^2=1,  \Omega =0\}$. The set
$K=\{(\Sigma_+,\Sigma _-):\Sigma_+^2+\Sigma _-^2=1\}$ on the $\Sigma
_\pm $-plane is called the \emph{Kasner circle}. There are three
special points on the Kasner circle $K$ with coordinates
$(-1,0),(\frac {1}{2},\pm \frac {\sqrt {3}}{2})$, which divide the
circle $K$ into three equal parts. They are denoted by $T_1$, $T_2$
and $T_3$ \cite{Ren 1997}, as shown Fig.1. The $\alpha$-limit set of
a solution of type ${\bf II}$ with $N_1\not=0$ lies on the longer
one of the two open arcs with endpoints $T_2$ and $T_3$, while the
$\omega$-limit set lies on the shorter one of the  arcs. The points
$T_1$, $T_2$ and $T_3$ are permuted cyclically by the threefold
symmetry, which shows what happens for type ${\bf II}$ solution with
$N_2\not=0$ or $N_3\not=0$. In other words, the trajectories of
Bianchi type ${\bf II}$ vacuum solutions consist of heteroclinic
orbits to equilibria on the Kasner circle, of which projections onto
the $\Sigma _\pm $-plane yield straight lines through the point
$(\Sigma _+,\Sigma _-)=(2,0)$ in the case of $N_1\not= 0$. The
projections of the other cases of Bianchi type ${\bf II}$($N_2\not
=0$ or $N_3\not=0$)  are given similarly.

Therefore, if $x$ is a point in $K\setminus \{T_1,T_2,T_3\}$, then
there is a point $y$ in $K$ and vacuum type ${\bf II}$ heteroclinic
orbit with $x$ being an $\omega$-limit point and $y$ being an
$\alpha$-limit point. Furthermore,  if $x$ lies on the shorter of
the two open arcs with endpoints $T_2$ and $T_3$, then $y$ lies on
the longer of these arcs while $x$ and $y$ lie on the straight lines
through the point $(\Sigma _+,\Sigma _-)=(2,0)$ in the case of
$N_1\not= 0$. The same results can be obtained for the other two
cases, $N_2\not =0$ or $N_3\not=0$, whereas the straight line goes
through the point $(-1,-\sqrt {3})$ and $(-1,+\sqrt {3})$
respectively.

The \emph{Kasner map} $\varphi:K\rightarrow K$  maps each $x$ in
$K\setminus \{T_1,T_2,T_3\}$ to $y$ in $K$, and maps $T_i$ to $T_i$
$(i=1,2,3)$.

  The $\alpha$-limit of  system (1) corresponds to the initial singularity
  (big-bang singularity) of the cosmological model.  Belinskii, Khalatnikov
and Lifshit \cite{Belinskii 1982} and Misner \cite{Misner}
conjectured that the dynamics of the Bianchi type ${\bf IX}$ type
models in this limit follows the Kasner map. In \cite{Rin 2001}, it
was proved that at least for Bianchi type ${\bf IX}$ solutions, the
Bianchi attractor formed by the union of the Kasner circle and its
heteroclinic orbits is indeed an attractor for trajectories to
generic initial data under the time-reversed flow.

%%%%%%%%%%%%  4.3.2.Topological Entropy for the Kasner map
\subsubsection{Topological Entropy for  the Kasner map}

Here we will consider the entropy and chaotic property of the Kasner
map  using the above results.

The chaotic dynamics of the  Bianchi type ${\bf IX}$ cosmological
models in the big-bang singular limit has been widely discussed in
the last few decades. The transient behavior of the ${\bf IX}$
models towards the initial singularity can be described by sequences
of anisotropic Kasner states, that is, Bianchi type  ${\bf I}$
vacuum solutions. These sequences are determined by a discrete map,
Kasner map, which implies an oscillatory anisotropic behavior. In
the first work by Barrow \cite{Barr 1982}, the chaotic property of
this discrete map represented by the Gauss map, was studied.  But
the Gauss map itself corresponds to a specific time slicing and the
ambiguity of time was manifest in this research area  \cite{Ren
1997}. However, the Kasner map represented by the above mentioned
form as shown in Fig. 1 has been discussed, which doesn't depend on
any time slicing  \cite{Ren 1997}. Therefore, it is meaningful to
consider the chaotic property for this map, especially in the
circumstance of focus on the research interest in chaos for
cosmological models  in the big-bang singular limit.

 We will firstly show that this type of the Kasner map is $A$-coupled-expanding
 for a primitive matrix $A$ excluding the strictness and compute its topological entropy,
 and discuss its chaotic properties  in the sense of Li-Yorke and Devaney.

For convenience, the polar co-ordinate system $(r,\theta)$ is
introduced in the $(\Sigma_+, \Sigma_-)$-plane, thus the Kasner
circle $K$ can be denoted as
\begin{equation*}
 K=\{(1,\theta)|\hspace{0.5cm}\theta\in[0,2\pi]\}.
\end{equation*}
The co-ordinates of the special points $T_1$, $T_2$, $T_3$ are
respectively $(1,\pi/3)$, $(1,5\pi/3)$ ,  and  $(1,\pi)$. Let
$\Lambda _1$ be the shorter of the arcs between $T_2$ and $T_3$ on
the circle, $\Lambda_2$ and $\Lambda_3$ be the shorter of the arcs
between $T_1$ and $T_3$,  $T_1$ and $T_2$,respectively. In
particular,
\begin{eqnarray*}
\Lambda_1&=&\bigg\{(1,\theta) |\hspace{0.5cm}\theta\in [0, \frac
{\pi}{3}] \cup [\frac
{5\pi}{3},2\pi]\bigg\},\\
\Lambda_2&=&\bigg\{(1,\theta)|\hspace{0.5cm}\theta\in[\pi, \frac{5\pi}{3}]\bigg\},\\
\Lambda_3&=&\bigg\{(1,\theta)|\hspace{0.5cm}\theta\in[\frac{\pi}{3},\pi]\bigg\},
\end{eqnarray*}
as shown in Fig.~\ref{fig1}.

\begin{figure}\label{fig1}
\centering
\includegraphics[width=8cm]{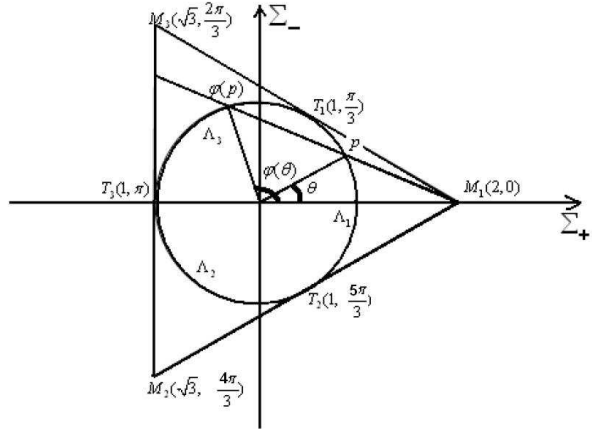}
\caption{The Kasner map $\varphi $ and the Kasner circle.}
\end{figure}

For an eventually positive $3\times 3$ matrix
\begin{equation*}
A_0=\left(\begin{array}{ccc}0&1&1\\1&0&1\\1&1&0\end{array}\right),
\end{equation*} one  can see that the Kasner map $\varphi$ is an
$A_0$-coupled-expanding map in $\Lambda_1$, $\Lambda_2$ and
$\Lambda_3$.

In fact,  it is easy to see that $\Lambda _1$, $\Lambda_2$ and
$\Lambda_3$ are closed subsets with disjoint interiors in $K$ and
satisfy
\begin{eqnarray*}
\varphi (\Lambda _1)&=&\Lambda _2\cup \Lambda _3,\\
\varphi (\Lambda _2)&=&\Lambda _3\cup \Lambda _1,\\
\varphi (\Lambda _3)&=&\Lambda _1\cup \Lambda _2.
\end {eqnarray*}

For convenience, one can denote points on $K$ by one parameter
$\theta \in [0,2\pi]$/(mod $2\pi$)  and define the natural metric on
the Kasner circle $K$ as
\begin{equation*}
d(\theta_1,\theta_2)=min\{|\theta_1-\theta_2|,\hspace{0.3cm}
2\pi-|\theta_1-\theta_2| \}, \hspace{0.5cm} \theta_1,\theta_2
\in[0,2\pi) \end{equation*}

 The Kasner map $\varphi:K\rightarrow K$ can be expressed by a map
$\Phi:[0,2\pi]/\sim$ $\rightarrow$ $[0,2\pi]/\sim$, where $\sim$
indicates that 0 and $2\pi$ are identical. Then one has the
following lemma.

\begin{lemma}\label{lemma4p3}
The map $\Phi$ satisfies
\begin{romanlist}[(iii)]
\item $\Phi(\theta)\in C^1([0,2\pi]/\sim)$,
\item  $|\Phi'(\theta)|\geq1$ for any $\theta \in K$. \label{lemma4p3item2}
\end{romanlist}
 The equal sign of (\ref{lemma4p3item2}) holds only for the
special points $T_1$, $T_2$ and $T_3$, that is,  for $
\theta=\pi,\pi/3$ and $ 5\pi/3$.
\end{lemma}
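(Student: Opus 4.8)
The plan is to make the Kasner map completely explicit in the angular coordinate and then differentiate. Recall from the geometric description that on $\Lambda_1$ the map $\varphi$ sends a point $x$ to the second intersection with $K$ of the straight line joining $x$ to the focus $(2,0)$, and that on $\Lambda_2,\Lambda_3$ the same central projection is used with the foci $(-1,-\sqrt3)$ and $(-1,\sqrt3)$, which are the images of $(2,0)$ under rotation by $\mp2\pi/3$. Since these three foci are permuted by the rotation $R$ of the plane through $2\pi/3$, and $R$ cyclically permutes $\Lambda_1,\Lambda_2,\Lambda_3$ while fixing $K$, the Kasner map is $R$-equivariant (this is the threefold symmetry noted above). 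Hence it suffices to analyse $\Phi$ on $\Lambda_1$ and transport the conclusions to $\Lambda_2,\Lambda_3$ by symmetry.

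First I would derive the formula on $\Lambda_1$: parametrising the line through $(2,0)$ and $(\cos\theta,\sin\theta)$ and intersecting it with $K$, the second intersection point is
\begin{displaymath}
\Big(\cos\Phi(\theta),\,\sin\Phi(\theta)\Big)=\left(\frac{4-5\cos\theta}{5-4\cos\theta},\ \frac{3\sin\theta}{5-4\cos\theta}\right),
\end{displaymath}
which lies on $K$ because $(4-5\cos\theta)^2+(3\sin\theta)^2=(5-4\cos\theta)^2$. Writing $D(\theta)=5-4\cos\theta$ and using that on the unit circle $\Phi'=\cos\Phi\,\frac{d(\sin\Phi)}{d\theta}-\sin\Phi\,\frac{d(\cos\Phi)}{d\theta}$, a direct computation collapses the resulting numerator to $-3\,D(\theta)^2$, giving the clean formula
\begin{displaymath}
\Phi'(\theta)=\frac{-3}{5-4\cos\theta}\qquad(\theta\in\Lambda_1).
\end{displaymath}
For part (i), I note that $D(\theta)=5-4\cos\theta\in[1,9]$ never vanishes, so the displayed expressions for $\cos\Phi,\sin\Phi$ are real-analytic and $\Phi$ is $C^\infty$ on the interior of each arc; the same holds on $\Lambda_2,\Lambda_3$ by $R$-equivariance. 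It then remains to check $C^1$-matching at the junctions $T_1,T_2,T_3$. The values match trivially since each $T_i$ is a fixed point of $\varphi$. For the derivative at $T_1$ (i.e.\ $\theta=\pi/3$), the one-sided derivative from $\Lambda_1$ is $-3/(5-4\cos(\pi/3))=-1$, while the one-sided derivative from the adjacent arc $\Lambda_3$ equals, after applying $R$, the value $\Phi'|_{\Lambda_1}(5\pi/3)=-3/(5-4\cos(5\pi/3))=-1$. Because $\cos(\pi/3)=\cos(5\pi/3)=\tfrac12$ the two agree; the identical computation at $T_2,T_3$ then yields $\Phi\in C^1([0,2\pi]/{\sim})$.

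For part (ii), the derivative formula gives $|\Phi'(\theta)|=3/(5-4\cos\theta)$ on $\Lambda_1$. Since $\theta\in\Lambda_1$ means $\cos\theta\in[\tfrac12,1]$, we have $5-4\cos\theta\in[1,3]$, hence $|\Phi'(\theta)|\ge1$, with equality precisely when $5-4\cos\theta=3$, i.e.\ $\cos\theta=\tfrac12$, i.e.\ at the endpoints $\theta=\pi/3\ (T_1)$ and $\theta=5\pi/3\ (T_2)$. Transporting by $R$ extends both the inequality and the equality characterisation to $\Lambda_2,\Lambda_3$, so $|\Phi'|\ge1$ on all of $K$ with equality only at $T_1,T_2,T_3$.

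The hard part will be the $C^1$-matching at the three junction points: because $\Phi$ is assembled from three \emph{different} central projections (three different foci), it is not a priori clear that the one-sided slopes agree there, and one must exploit both the exact threefold symmetry and the fact that $\Phi'$ depends on $\theta$ only through $\cos\theta$, so that the two endpoint angles $\pi/3$ and $5\pi/3$ of a given arc produce the same slope. The other point requiring care is verifying the second-intersection formula and the cancellation reducing the derivative numerator to $-3\,D(\theta)^2$; this is routine once the parametrisation is set up, but it is where an arithmetic slip would most easily creep in.
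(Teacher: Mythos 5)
Your proposal is correct and takes essentially the same approach as the paper: reduce to a single arc by the threefold symmetry, write the central projection through $(2,0)$ explicitly in the angular coordinate, and differentiate --- indeed, differentiating the paper's formula $\Phi(\theta)=\pi-\theta-2\arctan\frac{\sin\theta}{2-\cos\theta}$ yields exactly your $\Phi'(\theta)=\frac{-3}{5-4\cos\theta}$, with $|\Phi'|=1$ precisely when $\cos\theta=\tfrac12$. The only difference is one of completeness: the paper merely quotes the formula and invokes symmetry, whereas you derive the projection formula, carry out the derivative computation, and check the $C^1$-matching at $T_1,T_2,T_3$ explicitly.
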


 It is sufficient to consider for $\theta \in [0,\pi/3]$ by symmetry of the Kasner map.
 This lemma can thus be easily proved by using the fact that the map  $\Phi$ for
 $\theta \in [0, \pi/3]$  can be described by

\begin{equation}\label{eq4}
\Phi(\theta)=\pi-\theta-2\arctan\frac{\sin\theta}{2-\cos\theta}:
\theta \in[0,\frac{\pi}{3}].
\end{equation}

%%%Proposition 4.3%%%%%%%%%
\begin{proposition}\label{prop4p3}
The Kasner map is chaotic in the sense of Devaney as well as
Li-Yorke. Moreover the Kasner map  is a factor of the subshift
$\sigma _{A_0}$ and its topological entropy is $\log 2$.
\end{proposition}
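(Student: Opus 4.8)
The plan is to exhibit $(K,\varphi)$ as a system satisfying the hypotheses of Theorem \ref{thm4p3}, read off $h_{top}(K,\varphi)=\log\lambda_{A_0}$, compute $\lambda_{A_0}=2$, and then invoke Proposition \ref{prop4p2} for the two chaos statements. First I would record the structural data already assembled above: $\varphi$ is partition-$A_0$-coupled-expanding in the closed arcs $\Lambda_1,\Lambda_2,\Lambda_3$ with $\Lambda_1\cup\Lambda_2\cup\Lambda_3=K$, and $\varphi(\Lambda_1)=\Lambda_2\cup\Lambda_3$, $\varphi(\Lambda_2)=\Lambda_3\cup\Lambda_1$, $\varphi(\Lambda_3)=\Lambda_1\cup\Lambda_2$, which is exactly condition (\ref{thm4p3item2}) of Theorem \ref{thm4p3} (and of Theorem \ref{thm4p1}) for $A_0$. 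Next I would note that $A_0$ is a transition matrix that is primitive, since $A_0^2>0$, hence irreducible, and that each of its rows sums to $2$, so the row-sum hypothesis of Lemma \ref{lemma3p4} and of Proposition \ref{prop4p2} is met; its characteristic polynomial is $\det(A_0-\lambda I)=-\lambda^3+3\lambda+2$, i.e. $\lambda^3-3\lambda-2=(\lambda-2)(\lambda+1)^2$, so that $\lambda_{A_0}=2$.

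The substantive step is condition (\ref{thm4p3item1}): that $\bigcap_{n=0}^{\infty}\varphi^{-n}(\Lambda_{a_n})$ is a singleton for every $\alpha=(a_0a_1\cdots)\in\Sigma^+_{A_0}$. Here the Kasner map is genuinely more delicate than the maps covered by Theorem \ref{thm4p4}: differentiating \eqref{eq4} gives $|\Phi'(\theta)|=3/(5-4\cos\theta)$ on the fundamental arc (and symmetric copies elsewhere), so by Lemma \ref{lemma4p3} one has only $|\Phi'|\ge 1$, with equality exactly at the three fixed points $T_1,T_2,T_3$; there is no uniform constant $r>1$, and condition (\ref{thm4p4con1}) of Theorem \ref{thm4p4} fails. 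I would therefore verify the singleton property directly. Set $D_N=\bigcap_{n=0}^{N}\varphi^{-n}(\Lambda_{a_n})$; these are nested closed arcs, and on each $\Lambda_{a_n}$ the map $\varphi$ is a monotone $C^1$ homeomorphism onto its image, so writing $J=\bigcap_N D_N$ and $\ell_n=d(\varphi^n(J))$ one has $\varphi^n(J)\subset\Lambda_{a_n}$, $\ell_n\le d(\Lambda_{a_n})=2\pi/3$, and
\begin{displaymath}
\ell_{n+1}=\int_{\varphi^n(J)}|\Phi'|\ge\ell_n,
\end{displaymath}
so $(\ell_n)$ is non-decreasing and bounded, whence $\int_{\varphi^n(J)}(|\Phi'|-1)=\ell_{n+1}-\ell_n\to 0$.

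The main obstacle is to deduce $\ell_\infty=0$. The point is that $|\Phi'|-1$ is continuous, nonnegative and vanishes only at $T_1,T_2,T_3$, the common endpoints of the arcs $\Lambda_i$; an arc of length bounded below can make $\int(|\Phi'|-1)$ small only by concentrating, up to vanishing length, inside a shrinking neighborhood of one of these three points. I would rule this out by the parabolic instability of the $T_i$: each has multiplier $\Phi'(T_i)=-1$ with a nonzero second-order coefficient (near $T_1$ one finds $\Phi(\pi/3-s)-\pi/3=s+\tfrac{\sqrt3}{3}s^2+O(s^3)$), so an arc with an endpoint at $T_i$ is mapped to an arc with an endpoint at $T_i$ of strictly larger length, the increment being of order $\ell_n^2$; a nonzero $\ell_\infty$ then forces $\ell_{n+1}-\ell_n$ to stay bounded away from $0$, contradicting $\ell_{n+1}-\ell_n\to0$, and the far endpoint is in any case pushed a definite distance away so that $\varphi^{n}(J)$ must leave every such neighborhood. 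Hence $\ell_\infty=0$, $d(D_N)\to0$, and $\bigcap_{n}\varphi^{-n}(\Lambda_{a_n})$ is a singleton, establishing condition (\ref{thm4p3item1}).

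With both conditions of Theorem \ref{thm4p3} (equivalently, of Theorem \ref{thm4p1}) verified, I conclude by Theorem \ref{thm4p1} that $(K,\varphi)$ is a factor of $(\Sigma^+_{A_0},\sigma_{A_0})$, and by Theorem \ref{thm4p3} that $h_{top}(K,\varphi)=\log\lambda_{A_0}=\log 2$. Finally, since $A_0$ is irreducible with a row of row-sum $2$ and $\varphi$ is a continuous $A_0$-coupled-expanding circle map satisfying the assumptions of Theorem \ref{thm4p3}, Proposition \ref{prop4p2} gives that $\varphi$ is chaotic in the sense of Devaney as well as of Li-Yorke, which completes the proof.
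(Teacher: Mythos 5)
Your proposal is correct and follows the same skeleton as the paper's proof: verify the two hypotheses of Theorem \ref{thm4p3} for the Kasner map with the matrix $A_0$, read off $h_{top}=\log\lambda_{A_0}=\log 2$, and obtain Devaney and Li--Yorke chaos from Proposition \ref{prop4p2}. The difference is in how much of the work is actually carried out. The paper dispatches the crucial singleton condition (\ref{thm4p3item1}) in one sentence (``from (\ref{eq4}) it can be confirmed\dots''), whereas you correctly identify that this is the only delicate point: since $|\Phi'(\theta)|=3/(5-4\cos\theta)$ equals $1$ exactly at the parabolic fixed points $T_1,T_2,T_3$, there is no uniform expansion constant $r>1$, Theorem \ref{thm4p4} is inapplicable, and the nested cylinders do not shrink geometrically, so a genuine non-uniform-expansion argument is required; you supply one (monotone lengths $\ell_n$, hence $\int_{\varphi^n(J)}(|\Phi'|-1)=\ell_{n+1}-\ell_n\to 0$, then a contradiction if $\ell_\infty>0$). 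That filled-in verification is real content the paper omits, and your supporting computations (the derivative formula and the expansion $\Phi(\pi/3-s)-\pi/3=s+\tfrac{\sqrt3}{3}s^2+O(s^3)$) are correct. One remark: your final step can be made both simpler and tighter, and the parabolic second-order estimate is not actually needed. A connected sub-arc of $\Lambda_{a_n}$ of length at least $\delta>0$ cannot concentrate near the zeros of $|\Phi'|-1$: taking $r=\delta/4$, either the arc meets the $r$-neighborhoods of both endpoints of $\Lambda_{a_n}$, in which case it contains the entire middle portion of $\Lambda_{a_n}$, or it meets at most one, in which case at least $\delta/2$ of its length lies at distance $\geq r$ from $\{T_1,T_2,T_3\}$, where $|\Phi'|-1\geq c(r)>0$. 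Either way $\int_{\varphi^n(J)}(|\Phi'|-1)$ is bounded below by a positive constant independent of $n$, contradicting $\ell_{n+1}-\ell_n\to 0$ outright; this connectedness argument covers all positions of the arcs and avoids the one soft spot in your write-up, namely the informal treatment of arcs that are not anchored at a fixed point (``the far endpoint is in any case pushed a definite distance away'').
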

\begin{proof}
 $A_0$ is an irreducible matrix  with row-sum 2, and it is obvious that  $\Phi$ is
  a partition-$A_0$-coupled expanding map satisfying Condition (\ref{thm4p3item2}) of
   Theorem \ref{thm4p3}.
  From (\ref{eq4}), it can be confirmed that condition 1 of Theorem \ref{thm4p3} is also satisfied.
   Therefore,  the Kasner map is chaotic in the sense of Devaney as well as  Li-Yorke
   from Proposition \ref{prop4p2}. From Theorem \ref{thm4p3}, one can see that the second statement obviously
   holds  since the largest eigenvalue of $A_0$ is 2.
\end{proof}

 Let  $h:\Sigma _3^+ (A_0)\rightarrow K$:
\begin{equation*}
h(\alpha )=\bigcap \limits_{n=0}^\infty \overline
{\varphi^{-n}(int\Lambda _{\alpha _n})}, \hspace{0.5cm}\alpha
=(\alpha_n)\in\Sigma _3^+(A_0). \end{equation*} One can see that the
map $h$ is surjective and continuous, and it satisfies $h\circ
\sigma_{A_0} =\varphi \circ h$. That is, $h$ is a topologically
semi-conjugate map for which  the Kasner map is a factor of the
subshift $\sigma_{A_0}:\Sigma_3^+(A_0)\rightarrow\Sigma_3^+(A_0)$.
Therefore, one can easily obtain the following proposition.

\begin{proposition}\label{prop4p4}
 Let $h:\Sigma _3^+(A_0)
\rightarrow K$ be the topologically semi-conjugate map for which the
Kasner map $\varphi$ is a factor of  $\sigma _{A_0}$ as mentioned
above, then one has
\begin{romanlist}[(iii)]
\item For any $y\in K$, $y$ has exactly one or two pre-images
in $\Sigma _3^+(A_0)$, i.e. $h^{-1}(y)$ consists of either one or
two points,
\item The set of points $y \in K$ with $h^{-1}(y)$
consisting of more than one point is contained in the countable set
$\bigcup\limits_{n=1} ^ \infty h^{-n}(\{T_1,T_2,T_3\})$.
\end{romanlist}
\end{proposition}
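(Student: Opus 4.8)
The plan is to analyze the fibers $h^{-1}(y)$ directly in terms of the coding produced by the semi-conjugacy $h$. The key observation is that a point $y\in K$ can have more than one pre-image only when its forward orbit under $\varphi$ is forced through one of the ``overlap'' points of the partition, namely the boundary points $\{T_1,T_2,T_3\}$ that lie in the intersection $\overline{int\,\Lambda_i}\cap\overline{int\,\Lambda_j}$. Accordingly, I would first make precise which points of $K$ admit ambiguous coding. Since the arcs $\Lambda_1,\Lambda_2,\Lambda_3$ meet only at the three special points, a point $z\in K$ lies in exactly one $\overline{int\,\Lambda_i}$ unless $z\in\{T_1,T_2,T_3\}$, in which case it lies in exactly two consecutive arcs. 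This dichotomy is the heart of the whole statement.

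First I would establish part (i). Fix $y\in K$ and suppose $\alpha=(\alpha_n),\beta=(\beta_n)\in\Sigma_3^+(A_0)$ both satisfy $h(\alpha)=h(\beta)=y$. By the defining formula for $h$, this forces $\varphi^n(y)\in\overline{int\,\Lambda_{\alpha_n}}\cap\overline{int\,\Lambda_{\beta_n}}$ for every $n\ge0$. At any index $n$ where $\alpha_n\neq\beta_n$, the point $\varphi^n(y)$ must lie in the intersection of two distinct arcs, hence $\varphi^n(y)\in\{T_1,T_2,T_3\}$. But each $T_i$ is a fixed point of $\varphi$, so once $\varphi^n(y)$ lands on a special point it stays there; consequently the orbit passes through a special point at \emph{every} index $m\ge n$, and at the first such index $n_0$ the two codes can branch in at most the two arcs adjacent to that special point. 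Because the admissibility condition in $\Sigma_3^+(A_0)$ together with the rigidity of the fixed point then pins down the continuation uniquely once a branch is chosen, there are at most two admissible codes over $y$. This yields ``one or two points.''

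Next I would prove part (ii). From the argument above, $h^{-1}(y)$ has more than one point precisely when some iterate $\varphi^{n}(y)$ equals one of $T_1,T_2,T_3$; equivalently $y\in\varphi^{-n}(\{T_1,T_2,T_3\})$ for some $n\ge1$ (the case $n=0$ giving the special points themselves, whose fibers I would check separately to confirm they are finite). Hence the set of $y$ with non-singleton fiber is contained in $\bigcup_{n\ge1}\varphi^{-n}(\{T_1,T_2,T_3\})$, which under the semi-conjugacy is exactly the set written as $\bigcup_{n\ge1}h^{-n}(\{T_1,T_2,T_3\})$ in the statement. Countability follows because $\varphi$ is a piecewise-$C^1$ map with $|\Phi'|\ge1$ (Lemma \ref{lemma4p3}); in particular each $\varphi^{-1}$ of a finite set is finite, so each $\varphi^{-n}(\{T_1,T_2,T_3\})$ is finite and the countable union is countable.

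The main obstacle, and the step requiring the most care, is the branching analysis in part (i): I must rule out the possibility that two codes differ at infinitely many isolated indices without the orbit becoming trapped at a special point. This is exactly where the fact that each $T_i$ is \emph{fixed} by $\varphi$ does the decisive work, converting a single coincidence $\varphi^{n_0}(y)\in\{T_1,T_2,T_3\}$ into a permanent one for all later times and thereby collapsing the apparent freedom down to a single binary choice. I would therefore state and use this fixed-point property explicitly, and I would also verify using the explicit expression \eqref{eq4} and the threefold symmetry that $\varphi$ maps each $T_i$ to itself, so that no special point escapes. Once that rigidity is in hand, the counting and the countability both reduce to routine bookkeeping.
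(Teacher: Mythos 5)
Your argument is correct — but note that the paper itself offers no proof of Proposition \ref{prop4p4}: it is stated as something ``one can easily obtain'' from the semi-conjugacy, so there is no paper proof to compare against, and your write-up supplies exactly the missing details along what is surely the intended route. The three pillars you use are all sound: (a) $h(\alpha)=y$ forces $\varphi^n(y)\in\Lambda_{\alpha_n}=\overline{int\,\Lambda_{\alpha_n}}$ for all $n$ (because $\overline{\varphi^{-n}(int\,\Lambda_{\alpha_n})}\subset\varphi^{-n}(\Lambda_{\alpha_n})$ and the latter is closed), so two codes over $y$ can disagree at index $n$ only if $\varphi^n(y)\in\{T_1,T_2,T_3\}$; (b) each $T_i$ is fixed, so past the first hitting time $n_0$ the orbit sits at one special point forever, the prefix before $n_0$ is uniquely forced, and the zero diagonal of $A_0$ forces the tail to alternate between the two arcs meeting at that $T_i$, giving at most two admissible codes, while surjectivity of $h$ gives at least one; (c) finiteness of each $\varphi^{-n}(\{T_1,T_2,T_3\})$ follows from Lemma \ref{lemma4p3}, since $|\Phi'|\ge 1>0$ makes $\Phi$ locally injective, so point fibers are discrete and hence finite by compactness of $K$. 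Two small points to tighten. First, your ``precisely when'' in part (ii) asserts an equivalence, but your argument establishes (and the statement needs) only the implication that a non-singleton fiber forces the orbit through a special point; the converse would require verifying that both alternating tails genuinely yield codes with $h$-image $y$, which you have not done and need not do. Second, the $n=0$ case needs no separate treatment: since $\varphi(T_i)=T_i$, one has $\{T_1,T_2,T_3\}\subset\varphi^{-1}(\{T_1,T_2,T_3\})$, which is exactly why the union in the statement may start at $n=1$. Finally, your reading of the paper's $h^{-n}(\{T_1,T_2,T_3\})$ as $\varphi^{-n}(\{T_1,T_2,T_3\})$ is the only sensible one, since iterating $h^{-1}$ does not type-check for a map $h:\Sigma_3^+(A_0)\to K$; it is evidently a typo in the statement.
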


\begin{remark} As mentioned above,  the Kasner map( actually the BKL
map) represented by the Gauss map has chaotic properties and its
topological entropy is also $\log 2$ even though the Gauss map
itself corresponds to a specific time slicing and ambiguity of time
is manifest in cosmological model area \cite{NCJL1997}. Here we have
proved that the Kasner map represented as in \cite{Ren 1997}, which
doesn't depend on any time slicing, is chaotic in the sense of
Devaney as well as Li-Yorke. Moreover we have found that this map is
a factor of a subshift and its topological entropy is also  $\log
2$. Proposition 4.4 shows clearly the relation between the Kasner
map and the subshift $\sigma_{A_0}$.
\end{remark}

\section{Conclusion}\label{con}
 In this paper,
 the topological entropy for
strictly $A$-coupled-expanding maps on compact metric spaces has
been studied as a criterion for chaos in the sense of Li-Yorke has
been presented, which is less conservative and  more generalized
than the latest result. Some conditions for $A$-coupled-expanding
maps excluding the strictness to be factors of subshifts of finite
type  have been obtained, along with the topological entropy for
$A$-coupled-expanding circle maps excluding the strictness. In
particular, the topological entropy of our proposed
"partition-$A$-coupled-expanding'' map has been estimated and
illustrated with the Kasner map.

\section*{Acknowledgment}
 This work is supported
by the Chinese National Natural Foundation under Grant No. 61174028.
Also it is with immense gratitude that I acknowledge anonymous reviewers.

%\end{multicols}
\end{document}